

\documentclass[11pt,final]{amsart} 
\usepackage{etex}
\usepackage[utf8]{inputenc} 
\addtolength{\topmargin}{-1.1cm}
\addtolength{\textheight}{2.2cm}
\addtolength{\evensidemargin}{-1.1cm}
\addtolength{\oddsidemargin}{-1.1cm}
\addtolength{\textwidth}{2.2cm}




\usepackage{graphicx} 


\usepackage{array} 
\usepackage{paralist} 
\usepackage{natbib}
\usepackage{hyperref}

\usepackage{amsmath}
\usepackage{amssymb}
\usepackage{amsthm}
\usepackage{mathrsfs}
\usepackage{enumerate}
\usepackage[all]{xy}
\usepackage{todonotes}
\usetikzlibrary{calc}

\theoremstyle{definition}
\numberwithin{equation}{section}
\newtheorem{thm}[equation]{Theorem}
\newtheorem{prop}[equation]{Proposition}
\newtheorem{cor}[equation]{Corollary}

\newtheorem{lem}[equation]{Lemma}

\newtheorem{remk}[equation]{Remark}


\newcommand{\wt}[1]{\widetilde{#1}}

\newcommand{\vocab}[1]{\textbf{#1}}

\newcommand{\bbr}{\mathbb R}
\newcommand{\bbz}{\mathbb Z}

\newcommand{\bbn}{\mathbb N}

\newcommand{\bbc}{\mathbb C}

\newcommand{\diag}{\mathrm{diag}}

\newcommand{\tr}{\textnormal{tr}}


\newcommand{\supp}{\textnormal{supp}}


\newcommand{\Ad}{\textnormal{ad}}
\newcommand{\mxx}[4]{\left(\begin{array}{cc} #1 & #2\\ #3 & #4\end{array}\right)}

\newcommand{\adj}{\text{adj}}


\begin{document}
\title[The Symmetric $2\times2$ Hypergeometric Matrix Differential Operators]
{The Symmetric $2\times2$ Hypergeometric Matrix Differential Operators}

\author[W. Riley Casper]{W. Riley Casper}
\address{Department of Mathematics, Louisiana State University, Baton Rouge LA}
\email{\href{wcasper1@lsu.edu}{wcasper1@lsu.edu}}

\subjclass[2010]{42C05, 34L10, 16S38}

\date{} 

\maketitle
\begin{abstract}
We obtain an explicit classification of all $2\times2$ real hypergeometric Bochner pairs, ie. pairs $(W(x),\mathfrak D)$ consisting of a $2\times 2$ real hypergeometric differential operator $\mathfrak D$ and a $2\times 2$ weight matrix satisfying the property that $\mathfrak D$ is symmetric with respect to the matrix-valued inner product defined by $W(x)$.
Furthermore, we obtain a classifying space of hypergeometric Bochner pairs by describing a bijective correspondence between the collection of pairs and an open subset of a real algebraic set whose smooth paths correspond to isospectral deformations of the weight $W(x)$ preserving a bispectral property.
We also relate the hypergeometric Bochner pairs to classical Bochner pairs via noncommutative bispectral Darboux transformations.
\end{abstract}

\section{Introduction}
The $N\times N$ matrix-valued hypergeometric differential equation is the equation
\begin{equation}\label{hypergeometric equation}
(1-x^2)\Psi''(x) + \Psi'(x)A_1(x) + \Psi(x)A_0 = \Lambda\Psi(x)
\end{equation}
and was introduced by Tirao in \cite{tirao} as a natural generalization of the classical hypergeometric differential equation.
In the above, $\Psi(x)$ is an unknown $N\times N$ hypergeometric function, $A_1(x) = A_{11}x + A_{10}$ is a matrix-valued polynomial of degree $1$, and $A_{11},A_{10},A_0,$ and $\Lambda$ are all $N\times N$ complex-valued matrices.
Alternatively, the above expression may be rewritten in terms of a right-acting hypergeometric matrix differential operator as
\begin{equation}\label{hypergeometric operator}
\Psi(x)\cdot \mathfrak D = \Lambda\Psi(x), \ \ \mathfrak D = \partial_x^2(1-x^2)I + \partial_xA_1(x) + A_0.
\end{equation}
An $N\times N$ matrix-valued function $\Psi(x)$ satisfying \eqref{hypergeometric equation} or \eqref{hypergeometric operator} is called a matrix-valued hypergeometric function.

In this paper, we will classify the real hypergeometric matrix Bochner pairs $(W(x),\mathfrak D)$ consisting of a real, $2\times 2$ hypergeometric matrix differential operator $\mathfrak D$ symmetric with respect to a $2\times 2$ weight matrix $W(x)$ in a way made precise below.
In fact, we will show that the set $\mathcal E$ of real hypergeometric matrix Bochner pairs has a natural topological structure in the form of an analytic open subset of a real algebraic set, wherein smooth paths on $\mathcal E$ define isospectral deformations of the weight matrix $W(x)$ in the sense of \cite{adler} preserving a certain bispectral property.
Each pair $(W(x),\mathfrak D)$ will define a sequence of matrix-valued hypergeometric functions which are monic orthogonal matrix polynomials for the corresponding weight.
As such, these orthogonal matrix polynomials are  matrix-valued generalizations of Jacobi polynomials introduced by Gr\"unbaum in \cite{grunbaum2003a,grunbaum2003b}, referred to in the literature as orthogonal matrix polynomials of Jacobi type, and are intimately connected with representation theory \cite{grunbaum2005}.
In particular, they are related to spherical functions on homogeneous spaces obtained from rank $1$ Gelfand pairs and they connect hypergeometric matrix differential operators to the representation theory of Lie groups \cite{heckman,koelink}.

The primary tool for our analysis is the $\Ad$-condition \eqref{ad condition}, a complicated nonlinear relation between the eigenvalues and three-term recursion relation of a sequence of orthogonal matrix polynomials satisfying a second-order matrix differential operator.
The $\Ad$-condition originates from the work of Duistermaat and Gr\"unbaum \cite{duistermaat}, and plays a central role in bispectrality.
For papers specifically relying on an $\Ad$-condition in the noncommutative context of orthogonal matrix polynomials, see \cite{casper2017,casper2018,duran2004,grunbaum2003d,grunbaum2007b}.
Moreover in the scalar case the $\Ad$-condition has been applied directly to the reobtain Bochner's classification result \cite{grunbaum1997}.

Previously, direct application of the $\Ad$-condition to the $N\times N$ matrix Bochner problem for $N>1$ has been stymied by the noncommutativity of the coefficients in the eigenvalues and recurrence relation.
In particular, the equations resulting from the $\Ad$-condition, ie. the $B(n)$, $C(n)$ and $M(n)$-update equations below, are at first blush ugly and intractable.
It is worth noting, however, that the $\Ad$-condition has been successfully applied in the simpler but related context of sequences of matrix polynomials satisfying first order difference and differential equations \cite{grunbaum2003d}.
Even so, in the Bochner case the difference and differential equations are second-order and the relevant equations are very complex, as may be seen by the $B(n)$ and $C(n)$-update equations \eqref{B update equation} and \eqref{C update equation}.

In this paper we will realize this \emph{noncommutativity as an asset} and use it to obtain a complete classification of the $2\times 2$ matrix-valued hypegeometric differential operators.
In particular, the noncommutativity gives us an explicit expression of $M(n)$ in terms of $B(n)$, up to an unknown scalar multiple, as in Proposition \ref{getting M from B}.
By deriving an explicit equation for entries in the $\Ad$-condition, we obtain a system of $42$ high-order polynomial equations in $8$ variables, whose zero locus parametrizes the set of symmetric hypergeometric matrix differential operators.
The polynomials involved are \emph{enormous}, with each polynomial taking up multiple megabytes of memory to express, and painstakingly solved by computer via Gr\"obner-basis type algorithms over the span of several days.

Despite the tremendous computational effort involved in the solution of the polynomial system mentioned in the previous paragraph, the solutions we obtain are simple and beautiful enough to be expressed cleanly in Theorem \ref{classifying space} and \ref{classification theorem} below.
This suggests that while the methods employed herein lead to a difficult mathematical knot, we should be able to sidestep these difficult mathematical computations by other means.
Such a side-step will be especially important for the complete classification of the hypergeometric matrix Bochner pairs for $N>2$, wherein the computational complexity will otherwise greatly increase.
To this end, we suggest several methods of transforming known matrix Bochner pairs into new ones which in an ideal situation might allow us to easily obtain all possible matrix Bochner pairs from a simple starting family of pairs.
These include the noncommutative bispectral Darboux transformations of \cite{casper2017,casper2018}, as well as bispectrality-preserving isospectral deformations of the weight matrix $W(x)$ and in particular those deformations of $W(x)$ arising from isomonodromic deformations of the matrix hypergeometric equation satisfied by $W(x)$ which fix the poles.
As such, the associated deformations are non-Schlesinger in nature and correspond to the presence of resonance in the associated Fuchsian system.
We emphasize here that we do not explicitly develop the connection between isomonodromic deformations and hypergeometric matrix Bochner pairs in this paper, and we intend to develop this connection fully in future work.

\subsection{The classification}
To make our definition of symmetry precise, recall that a weight matrix is a smooth $N\times N$ matrix-valued function $W: \bbr\rightarrow M_N(\bbc)$ which is positive-definite and Hermitian on an interval $\supp(W)\subseteq \bbr$, identically zero outside $\supp(W)$, and which has finite moments, ie. $\int |x|^n W(x)dx <\infty$ for all $n\geq 0$.
Note that in the hypergeometric case, $\supp(W) = (-1,1)$.
Each weight matrix $W(x)$ defines a matrix-valued inner product on the algebra of $N\times N$ matrix-valued polynomials $M_N(\bbc[x])$ given by
$$\langle P(x),Q(x)\rangle_W := \int P(x)W(x)Q(x)^*dx,$$
where here $Q(x)^*$ denotes the Hermitian conjugate of $Q(x)$.
The objective of this paper is to classify the pairs $(W(x),\mathfrak D)$ of $2\times 2$ weight matrices $W(x)$ and $2\times 2$ hypergeometric differential operators $\mathfrak D$ which are symmetric with respect to $W(x)$, ie. which satisfy
$$\langle P(x)\cdot\mathfrak D, Q(x)\rangle_W = \langle P(x),Q(x)\cdot\mathfrak D\rangle_W\ \forall\ P(x),Q(x)\in M_N(\bbc[x]).$$
We call such pairs $(W(x),\mathfrak D)$ hypergeometric matrix Bochner pairs in reference to the 1929 classification result by Bochner in the scalar (ie. $1\times 1$) case \cite{bochner}.
In some previous papers, such as \cite{grunbaum2003a}, these are referred to as ``classic pairs".
We call a matrix Bochner pair real if $W(x)$ and $\mathfrak D$ are both real.

Our classification of hypergeometric matrix Bochner pairs is performed modulo a collection of elementary transformations which trivially create new hypergeometric matrix Bochner pair from a given one.
The collection of all $2\times 2$ hypergeometric matrix Bochner pairs has an action by several obvious groups of automorphisms, described in the list below.
\begin{itemize}
\item translation: $(W(x),\mathfrak D)\mapsto (W(x),\mathfrak D + \alpha I)$ for $\alpha\in \bbr$,
\item conjugation: $(W(x),\mathfrak D)\mapsto (UW(x)U^*,U\mathfrak D U^{-1})$ for $U\in M_2(\bbc)^\times$,
\item reflection: $(W(x),\mathfrak D)\mapsto (W(-x),\sigma(\mathfrak D))$, where here $\sigma$ is the endomorphism of the Weyl algebra induced by $x\mapsto -x$.
\end{itemize}
Moreover by symmetry conditions described below, the matrix $A_0$ will necessarily be diagonalizable.
Therefore up to conjugation and translation, we may write
\begin{equation}\label{A equation}
A_{11} = \mxx{\lambda + d}{b+c}{b-c}{\lambda - d}\ \ \text{and}\ \ A_0 = \mxx{a}{0}{0}{-a}
\end{equation}
for some constants $a,b,c,d,$ and $\lambda$.
Also the value $A_{10}$ may be expressed explicitly in terms of the (normalized) first moment $B(0)$ of the weight matrix $W(x)$ by
\begin{equation}\label{A10 equation}
A_{10} = [A_0,B(0)]-A_{11}B(0),\ \ B(0) := \left(\int xW(x)dx\right)\left(\int W(x)dx\right)^{-1}.
\end{equation}
Note that reflection preserves the values of $A_{11}$ and $A_0$, while changing the sign of $B(0)$.
Thus up to translation and conjugation, we may assume that either $c=2-b$ or else $b=c=0$ and $B(0)$ is symmetric.

The $2\times 2$ hypergeometric matrix differential operator $\mathfrak D$ is completely determined by the value of $A_{11},A_0$ and $B(0)$, and $\mathfrak D$ in turn determines the corresponding weight matrix $W(x)$ up to similarity.
In this way, the  value of $(W(x),\mathfrak D)$ is determined by the values of $A_{11},A_0,$ and $B(0)$, up to similarity of $W(x)$.

The next theorem describes the classifying space of all hypergeometric matrix Bochner pairs in terms of an open subset of a real algebraic set in $\bbr^9$, parametrized by the values of $a,b,c,d,\lambda$ and $B(0)_{ij}$.

Note that we exclude the reducible matrix Bochner pairs, ie. those pair which are equivalent to the direct sum of two scalar pairs, since these are uninteresting.
\begin{thm}[Classifying Space]\label{classifying space}
Up to translation, conjugation and reflection, the classifying space $\mathcal E\subset\bbr^9$ of all irreducible $2\times 2$ real hypergometric matrix Bochner pairs is defined by the $9$-tuples $(a,b,c,d,\lambda,B(0)_{11},B(0)_{12},B(0)_{21},B(0)_{22})$ satisfying one of the following cases
\begin{enumerate}[(I)]
\item  $b = c = 0$ and $d = 0$;
\begin{align*}
(a-1)B(0)_{11} &= (a+1)B(0)_{22}\\
B(0)_{22}^2\lambda^2 &= \left((4a^2-\lambda^2)B(0)_{21} + 4\right)(a-1)^2\\
B(0)_{12} &= 1
\end{align*}
\item  $c = 2-b$, $b = 1$ and $d = -1/2$;
\begin{align*}
B(0)_{11} &= (4a+2)B(0)_{21} - 1 + 8a\frac{1-2aB(0)_{21}}{2\lambda-1},\\
B(0)_{22} &= (4a-2)B(0)_{21} - 1 + (8a-4)\frac{1-2aB(0)_{21}}{2\lambda+1},\\
B(0)_{12} &= -4B(0)_{21}-8\frac{1-2aB(0)_{21}}{2\lambda+1}.
\end{align*}
\item  $c = 2-b$ and $2ad+(\lambda+1)(4(b-1)+d^2) = 0$;
\begin{align*}
B(0)_{11}d &= ((b-1)B(0)_{12}-B(0)_{21})\lambda - 2B(0)_{21}\\
B(0)_{22}d &= ((b-1)B(0)_{12}-B(0)_{21})\lambda + 2(b-1)B(0)_{12}\\
 d^2 &= \lambda^2\frac{(B(0)_{21}+(b-1)B(0)_{12})^2}{1-\lambda^2B(0)_{12}B(0)_{21}}
\end{align*}
\end{enumerate}
\end{thm}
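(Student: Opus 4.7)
The plan is to translate the symmetry condition defining a hypergeometric matrix Bochner pair into the $\Ad$-condition on the three-term recurrence of the associated monic orthogonal matrix polynomials, and then to solve the resulting polynomial system in the eight scalar coordinates $(a,b,c,d,\lambda,B(0)_{11},B(0)_{12},B(0)_{21},B(0)_{22})$ that survive after the normal-form reductions described in the excerpt.

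\emph{Reduction of the parameter space.} First I would apply the normal-form reductions already sketched above. The symmetry forces $A_0$ to be diagonalizable, so that conjugation by an invertible $U$ together with a translation puts $A_{11}$ and $A_0$ into the shape \eqref{A equation}. The identity \eqref{A10 equation} then expresses $A_{10}$ in terms of $B(0)$ and the remaining parameters, leaving only the nine scalar coordinates above on which $\mathcal E$ sits inside $\bbr^9$. Reflection symmetry is used to further impose that either $c=2-b$ or $b=c=0$ with $B(0)$ symmetric, which cleanly cuts the classification into cases.

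\emph{Setting up the polynomial system.} The monic orthogonal matrix polynomials $\{P_n(x)\}$ for $W(x)$ satisfy a three-term recurrence
$$xP_n(x) = P_{n+1}(x) + P_n(x)B(n) + P_{n-1}(x)C(n),$$
and the symmetry of $\mathfrak D$ with respect to $W(x)$ forces $P_n(x)\cdot\mathfrak D = \Lambda(n)P_n(x)$ with $\Lambda(n) = -n(n-1)I + nA_{11}+A_0$. Substituting the recurrence into the eigenvalue equation and matching coefficients at each order yields the $B(n)$-, $C(n)$-, and $M(n)$-update equations cited in the introduction. The key leverage is the fact, mentioned in the introduction, that the noncommutativity of $2\times 2$ matrices solves $M(n)$ explicitly as an algebraic function of $B(n)$ up to an unknown scalar. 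Substituting this expression back into the $B$- and $C$-update equations produces a closed system in $B(n)$ and the fixed matrix parameters.

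\emph{Elimination and classification.} Specializing the closed system at $n=0$ and enforcing consistency of $B(n)$ for all $n\geq 0$ produces the $42$-equation polynomial system in the eight scalar unknowns referenced in the introduction. I would run a Gr\"obner-basis elimination and verify that the ideal decomposes into exactly three irreducible components corresponding to cases (I), (II), (III). For each component I would (a) confirm that its defining ideal simplifies to the relations displayed in the theorem, (b) show that the image in $\bbr^9$ is an analytic open subset of the real algebraic set, and (c) verify that generic points actually yield a genuine irreducible pair $(W(x),\mathfrak D)$ by exhibiting the corresponding weight matrix --- produced from \eqref{hypergeometric equation} with the specified $A_1(x),A_0$ --- and confirming that it is not similar to a block-diagonal weight. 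Finally I would check that distinct cases are not identified by translation, conjugation, or reflection, so that each irreducible pair is counted exactly once.

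\emph{Main obstacle.} The central difficulty is computational rather than conceptual: after substituting the expression for $M(n)$, the polynomials in the $42$-equation system are genuinely enormous, each taking megabytes of memory, and the decomposition relies on a non-trivial computer-algebra computation over Gr\"obner bases. A secondary technical point is to separate irreducible pairs from reducible ones cleanly, since reducible pairs lie on proper subvarieties of the full variety and must be removed without producing gaps in $\mathcal E$. The structural cleanliness of the final answer strongly suggests that a conceptual proof should exist --- perhaps via the noncommutative bispectral Darboux transformations highlighted in the introduction --- but in the absence of such a shortcut the direct computational route outlined here is what I would carry out.
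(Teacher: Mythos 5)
Your overall strategy---normal-form reduction, the $\Ad$-condition, the update equations, the expression of $M(n)$ in terms of $B(n)$ via noncommutativity, and a Gr\"obner-basis elimination on the resulting polynomial system---is the same route the paper takes. But there is a genuine gap: your elimination step silently assumes that $B(n)$ is determined as a rational function of $n$ from the initial data $B(0)$ via the linearized update equation. That closed form (Lemma \ref{HK lemma} and Corollary \ref{B is rational}) is only valid when the $4\times 4$ matrix $H(n)$ is nonsingular for almost every $n$. Lemma \ref{exceptional lemma} identifies the exceptional parameter loci where $H(n)$ is singular for \emph{all} $n$ (in the hypergeometric case $a_{22}=-1$, these are cases (iv) and (v): $a_{22}=\pm d$, $b=\pm c$, $\lambda=\pm(d+2a)$), and on those loci the entire substitution-and-eliminate strategy breaks down because $B(n)$ is no longer pinned down by $B(0)$. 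The paper devotes all of Section 6 to showing, by a separate analysis of the noncommutative Pearson equation and an overdetermined discrete system for the entries of $B(n)$, that these exceptional cases admit only reducible pairs. Without that exclusion you cannot claim your three families exhaust the irreducible pairs; you would only have classified the generic stratum.

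A second, smaller omission: Proposition \ref{getting M from B} requires $[B(n),\Lambda(n)]\neq 0$ and $M(n)$ non-scalar, and also that everything be real (the one-dimensionality of the space of $2\times2$ skew-symmetric matrices is what makes the argument work). Before you may substitute $M(n)=\beta(n)[B(n),\Lambda(n)]J$ into the $M(n)$-update equation you must dispose of the commuting case; the paper does this in Lemmas \ref{Lambda not scalar}, \ref{B does not commute diag} and Proposition \ref{B does not commute}, showing that $[B(n),\Lambda(n)]=0$ for all $n$ forces reducibility. Your remark about ``separating reducible pairs from irreducible ones'' gestures at this but does not identify the specific degenerations that must be ruled out, and these are exactly the points where your main formula is unavailable rather than merely where the answer happens to be reducible.
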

Families (I) and (II) are three dimensional, and family (III) is four dimensional.
Furthermore, (I) can be seen as a limit of (III) if we conjugate the hypergeometric pairs in (III) by a diagonal matrix and then take an appropriate limit killing the off-diagonal terms of $A_{11}$.

Smooth paths in $\mathcal E$ define isospectral deformations of the weight matrix $W(x)$ preserving the bispectral property of having a $W$-symmetric hypergeometric matrix differential operator $\mathfrak D$.
When we can construct a bispectrality preserving isospectral deformations by other means, it will define natural parametrizations of subspaces of $\mathcal E$.
Many such examples arise from \emph{isomonodromic} deformations of the corresponding hypergeometric equation which fix the poles.
However, other natural bispectrality preserving isospectral deformations exist which do not preserve the action of the monodromy group.
For example, we may consider the deformation defined (for $c=2-b$) by
\begin{align}\label{translation deformation}
a\mapsto& \sqrt{4k^2(b-1) + (a + kd)^2}\\\nonumber
b\mapsto& \frac{a^2(b-1)}{4k^2(b-1) + (a + kd)^2} + 1\\\nonumber
d\mapsto& \frac{ad + 4k(b-1) + kd^2}{\sqrt{4k^2(b-1) + (a + kd)^2}}\\\nonumber
\lambda\mapsto& \lambda-2k\\\nonumber
B(0) \mapsto& U(k)B(k)U(k)^{-1}
\end{align}
where here $k$ is a deformation parameter and $U(k)$ is a matrix diagonalizing $A_{11}k + A_0$ such that $(U(k)A_{11}U(k)^{-1})_{12} = 2$.
The function $B(k)$ is a matrix-valued rational function of $k$ given by the degree zero component three-term recursion relation for the sequence of monic orthogonal matrix polynomials associated to $W(x)$ and has a natural expression in terms of an integral in $W(x)$, as described in \eqref{bc equation} below.
Despite the enigmatic expression of \eqref{translation deformation}, the deformation formulas above arise naturally from consideration of the $\Ad$-condition described below.

For practical applications, the explicit values of the hypergeometric matrix Bochner pairs parametrized by the classifying space described above are desireable.
These are provided explicitly in the next theorem.
\begin{thm}[Classification Theorem]\label{classification theorem}
Suppose that $(W(x),\mathfrak D)$ is a real hypergeometric matrix Bochner pair corresponding to a point $(a,b,c,d,\lambda, B(0)_{11}, B(0)_{12}, B(0)_{21},B(0)_{22})$ in the classifying space $\mathcal E$.
Then the corresponding weight matrix $W(x)$ is of the form
\begin{equation}\label{W and Q}
W(x) = Q(x)(1-x)^{-\lambda/2-1-\gamma/2 - \sigma_-/2}(1+x)^{-\lambda/2-1+\gamma/2 - \sigma_+/2}.
\end{equation}
for some symmetric, matrix-valued rational function $Q(x)$ and integers $\sigma_\pm\in \{0,1\}$.
Here
$$\gamma = \frac{1}{2}\tr(A_{11}B(0)) = \frac{1}{2}((\lambda + d)B(0)_{11} + (b+c)B(0)_{21} + (b-c)B(0)_{12} + (\lambda + d)B(0)_{22})$$
and in particular $\det(W(x)) = (1-x)^{-\lambda-2-\gamma}(1+x)^{-\lambda-2+\gamma}$.

Furthermore, for each of the families (I-III) defined above, we have the following expressions for the corresponding matrix Bochner pairs (up to similarity of $W(x)$)
\begin{enumerate}[(I)]
\item $\sigma_-=0$, $\sigma_+=0$, $\gamma = arB(0)_{22}$ and
\begin{align*}
Q(x)_{11} &= (1+Q(x)_{12}^2)/Q_{22}(x)\\
Q(x)_{12} &= \frac{1}{(1-x^2)s}(-arB(0)_{22} + arx -(r-2)x)\\
Q(x)_{22} &= \frac{1}{(1-x^2)s}\frac{-r^2aB(0)_{22}^2 + (a-1)(r(r-2)B(0)_{22}x + (r-2)(x^2+1)) + 4a}{(a-1)r+2a}
\end{align*}


\begin{align*}
\mathfrak D
  & = \partial_x^2(1-x^2)I + \partial_x\mxx{(x\lambda-B(0)_{22}\lambda\frac{a+1}{a-1}}{-2a-\lambda}{\frac{1}{2a+\lambda}\left(B(0)_{22}^2\frac{\lambda^2}{(a-1)^2}-4\right)}{\lambda x-B(0)_{22}\lambda} + \mxx{a}{0}{0}{-a}
\end{align*}
where here
$$r = \frac{\lambda}{a-1}\ \ \text{and}\ \ s^2 = \frac{(r-2)(a-1)((r+2)a-(r-2))}{4-r^2B(0)_{22}^2}.$$

\item $\sigma_-=0$, $\sigma_+=1$, $\gamma = r(1-2aB(0)_{21}) + \lambda$
\begin{align*}
Q(x)_{11} &= ((1+x)^{-1}+Q(x)_{12}^2)/Q_{22}(x)\\
Q(x)_{12} &= \frac{1/2}{(1-x^2)s}(4arB(0)_{21}-8a+2x\lambda+3x + 2\lambda + 3)\\
Q(x)_{22} &= \frac{1}{(1-x^2)s}(4arB(0)_{21}-4ax + 3x + 2\lambda + 3)
\end{align*}
\begin{align*}
\mathfrak D
  & = \partial_x^2(1-x^2)I + \partial_x(x+1)\mxx{\lambda-1/2}{2}{0}{\lambda+1/2}\\
  & + \partial_x\mxx{(2a+1)rB(0)_{21}-4a}{-2rB(0)_{21}+4}{(r/2)B(0)_{21}}{(2a-1)rB(0)_{21}-4a + 2} + \mxx{a}{0}{0}{-a}
\end{align*}
where here
$$r = 4a-2\lambda-1\ \ \text{and} \ \ s^2 = \frac{4a(3-4a+2\lambda)}{B(0)_{21}r}.$$
\item $\sigma_-=1$, $\sigma_+=1$, $\gamma = \frac{(\lambda+1)r(B(0)_{21}^2\lambda^2 + (b-1)(r^2-1))}{B(0)_{21}^2\lambda^2 - (b-1)(r^2-1)}$
\begin{align*}
Q(x)_{11} &= ((1-x^2)^{-1}+Q(x)_{12}^2)/Q_{22}(x)\\
Q(x)_{12} &= \frac{1}{s}(B(0)_{21}^2\lambda^2(x-r) + (b-1)(x+r)(1-r^2))\\
Q(x)_{22} &= \frac{B(0)_{21}\lambda}{s}(B(0)_{21}^2\lambda^2 + (b-1)(r^2+2xr+1))\\
\end{align*}
\begin{align*}
\mathfrak D
 & = \partial_x^2(1-x^2)I + \partial_xx\mxx{\frac{B(0)_{21}\lambda^2r-B(0)_{21}^2\lambda^2  + (b-1)(r^2-1)}{B(0)_{21}\lambda r}}{2}{2b-2}{\frac{B(0)_{21}\lambda^2r+B(0)_{21}^2\lambda^2  - (b-1)(r^2-1)}{B(0)_{21}\lambda r}}\\
 & + \partial_x\mxx{B(0)_{21}-\lambda r-2r}{-\frac{B(0)_{21}\lambda (r^2+1) - r(r^2-1)}{B(0)_{21}\lambda r}}{\frac{B(0)_{21}\lambda (B(0)_{21}\lambda -r)}{r}}{-\lambda (B(0)_{21}+r)}\\
 & + \partial_x(b-1)(1-r^2)\mxx{\frac{(-B(0)_{21}^2\lambda ^2 + 2B(0)_{21}r\lambda (\lambda +1))}{B(0)_{21}\lambda ((b-1)(1-r^2)+B(0)_{21}^2\lambda ^2}}{\frac{(r-1)(r+1)}{B(0)_{21}^2\lambda ^2r}}{\frac{r^2+1}{r(1-r^2)}}{\frac{B(0)_{21}^2\lambda ^2 + 2B(0)_{21}\lambda r(\lambda +1)}{B(0)_{21}\lambda ((b-1)(1-r^2) + B(0)_{21}^2\lambda ^2}}\\
 & + \partial_x\frac{-(b-1)^2(1-r^2)^2}{B(0)_{21}\lambda ((b-1)(1-r^2) + B(0)_{21}^2\lambda ^2}\mxx{1}{0}{0}{-1}\\
 & + \frac{(\lambda  + 1)((b-1)(r-1)^2 + B(0)_{21}^2\lambda ^2)((b-1)(r+1)^2 + B(0)_{21}^2\lambda ^2)}{2B(0)_{21}\lambda r((b-1)(1-r^2) + B(0)_{21}^2\lambda ^2)}\mxx{1}{0}{0}{-1}
\end{align*}
where here
$$r^2 = 1-\lambda^2B(0)_{12}B(0)_{21}\ \ \text{and}\ \ s^2 = (B(0)_{21}^2\lambda^2 + (b-1)(r-1)^2)(B(0)_{21}^2\lambda^2 + (b-1)(r+1)^2).$$
\end{enumerate}
\end{thm}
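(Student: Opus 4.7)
The plan has two parallel strands. Constructing $\mathfrak D$ is essentially substitution: once a point of $\mathcal E$ is fixed, the coefficients $A_{11}$ and $A_0$ are given by \eqref{A equation}, and $A_{10}$ is recovered from $B(0)$ via \eqref{A10 equation}. In each of the three cases the polynomial constraints defining $\mathcal E$ let us eliminate all but one entry of $B(0)$ (for instance $B(0)_{21}$ in cases (II)--(III), $B(0)_{22}$ in case (I)), and after simplification the displayed matrix expressions for $\mathfrak D$ drop out. This step is mechanical but bookkeeping-heavy, and I would organize it side-by-side for the three families to reuse intermediate computations.

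Producing $W(x)$ is the substantive part. Symmetry of $\mathfrak D$ with respect to $\langle\cdot,\cdot\rangle_W$ is equivalent, after integration by parts, to the matrix Pearson-type equation
\begin{equation*}
2\bigl((1-x^2)W(x)\bigr)' = A_1(x)W(x) + W(x)A_1(x)^*,
\end{equation*}
together with a zeroth-order compatibility involving $A_0$ and the boundary conditions needed to kill the endpoint terms at $x=\pm 1$. The ansatz \eqref{W and Q} encodes the expected answer: a rational matrix factor $Q(x)$ dressing scalar Jacobi-like weights $(1-x)^\alpha(1+x)^\beta$ whose exponents are dictated by the spectrum of $A_{11}$. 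I would first compute $\gamma = \tfrac12\tr(A_{11}B(0))$ directly from the data of $\mathcal E$ and cross-check it against the scalar identity $\det W(x) = (1-x)^{-\lambda-2-\gamma}(1+x)^{-\lambda-2+\gamma}$; second, pin down $\sigma_\pm\in\{0,1\}$ by local Frobenius analysis of the Pearson equation at each singular endpoint, subject to the finite-moment requirement, which sorts the three families into $(\sigma_-,\sigma_+) = (0,0), (0,1), (1,1)$ respectively; third, substitute the ansatz into the Pearson equation and reduce to a linear system for the entries of $Q(x)$, whose pole and degree structure is bounded by the local analysis.

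The main obstacle I anticipate is family (III): the entries of $\mathfrak D$ and $Q$ are considerably more intricate, and one must certify in the end that the constructed $W(x)$ is genuinely Hermitian and positive-definite on $(-1,1)$. Hermiticity reduces to symmetry of $Q(x)$, which tracks back to the constraint $c=2-b$ already imposed on $\mathcal E$; positive-definiteness is then handled by the principal-minor criterion, using the closed form for $\det W$ together with the positivity of a single diagonal entry on $(-1,1)$. Uniqueness up to the similarity equivalence appearing in the statement is automatic once existence is established: the first-order Pearson equation plus the endpoint conditions determine $W(x)$ up to a single constant Hermitian left/right conjugation, which is precisely the residual freedom being quotiented out.
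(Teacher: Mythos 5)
Your proposal matches the paper's route: given a point of $\mathcal E$, the operator $\mathfrak D$ is read off from \eqref{A equation} and \eqref{A10 equation}, and $W(x)$ is obtained by solving the noncommutative Pearson equation \eqref{Pearson equation} subject to the skew-symmetric (auxiliary) equation \eqref{aux equation} and the boundary conditions, exactly as in Theorem \ref{differential condition}; your Frobenius/ansatz elaboration is a faithful (indeed more detailed) version of the paper's one-line justification. The only quibble is that the compatibility condition involving $A_0$ is the first-order ODE \eqref{aux equation} rather than a zeroth-order constraint, but this does not affect the argument.
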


The $2\times 2$ hypergeometric matrix Bochner pairs given by three dimensional family (I) were previously completely obtained in \cite{calderon}, though they differ from the ones expressed here by similarity and an affine transformation $x\mapsto (x+1)/2$ and their derivation methods are brute force and do not extend past their paper.
It's worth remarking that our methods extend to all the $2\times 2$, including the Hermite and Laguerre-type families.
Furthermore, they may be extended to a classification of the $N\times N$ \emph{theoretically}, barring limits in memory and computing power.
Even earlier, specific examples were obtained directly through representation theory \cite{grunbaum2002,koelink}.
Subcollections of the three dimensional families (I) and (II) were also found in \cite{grunbaum2003a}.
To our knowledge, no members of the the four dimensional family (III) have appeared in the literature.

\section{Background}
\subsection{The matrix Bochner problem}
A sequence of (monic) orthogonal matrix polynomials for a weight matrix $W(x)$ is a unique sequence of $N\times N$-matrix valued polynomials $P(x,0),P(x,1),\dots$ with $P(x,n)$ monic of degree $n$ for each $n$, satisfying the property that $\int P(x,n)W(x)P(x,m)^*dx = 0$ for $m\neq n$.
Sequences of orthogonal matrix polynomials were first investigated more than $70$ years ago by Krein \cite{krein} in the context of the matrix moment problem and Hermitian operators with nontrivial deficiency index.
Since then, orthogonal matrix polynomials have proven useful in a variety of areas of both pure and applied mathematics, including spectral theory, quasi-birth and death processes, signal processing, Gaussian quadrature, special functions, random matrices, integrable systems, and representation theory.

Of particular interest are the orthogonal matrix polynomials which are simultaneously eigenfunctions of a second order matrix-valued differential operator, since they naturally generalize the three families of \vocab{classical orthogonal polynomials}: the  Hermite, Laguerre, and Jacobi whose ubiquity throughout many fields of mathematics is well known.
However, it is worth noting that the value of the matrix-valued analogs of the classical orthogonal polynomials is largely based on their potential as a natural generalization.
A great deal of work remains to be done to translate this potential into concrete applications.

Bochner \cite{bochner} proved that up to affine transformation the only orthogonal polynomials on the real line which are eigenfunctions of a second-order differential operator are the three families of classical orthogonal polynomials.
For orthogonal matrix polynomials, however, the world is far richer and a complete classification has proved elusive.
The $N\times N$ matrix Bochner problem, posed originally by Dur\'{a}n \cite{duran1997}, is the problem of classifying all sequences of $N\times N$ orthogonal matrix polynomials $P(x,n)$ satisfying a second-order differential equation
\begin{equation}\label{main problem equation}
\left(\frac{d^2}{dx^2}P(x,n)\right)a_2(x) + \left(\frac{d}{dx}P(x,n)\right)A_1(x) + P(x,n)A_0(x) = \Lambda(n)P(x,n)
\end{equation}
for some sequence of matrices $\Lambda(0),\Lambda(1),\dots\in M_N(\bbr)$ and functions $a_2(x),A_1(x),A_0(x)$.
The coefficients in the differential equation \eqref{main problem equation} are polynomials of the form
$$a_2(x) = a_{22}x^2 + a_{21}x + a_{20},\ \ A_1(x) = A_{11}x + A_{10},\ \ \text{and}\ \ A_0(x) = A_0,$$
for some constants $a_{2i}\in \bbr$ and $A_{1i},A_0\in M_N(\bbr)$.
The precise value of $\Lambda(n)$ is determined explicitly by comparing leading coefficients of polynomials in \eqref{main problem equation}, giving
\begin{equation}\label{lambda equation}
\Lambda(n) = a_{22}n^2 + (A_{11}-a_{22}I)n + A_0.
\end{equation}
Alternatively, we can rewrite this in terms of a matrix-valued differential operator acting on the right as
\begin{equation}\label{main problem equation operator form}
P(x,n)\cdot\mathfrak D = \Lambda(n)P(x,n),\ \ \text{where}\ \ \mathfrak D = \partial_x^2a_2(x)I + \partial_xA_1(x) + A_0.
\end{equation}
More generally, one may consider the algebra of matrix-valued differential operators
$$\mathcal D(W) := \left\lbrace\mathfrak D = \sum_{m=0}^d \partial_x^mA_m(x): \forall n\exists\Lambda(n)\in M_N(\bbc)\ \text{s.t.}\ P(x,n)\cdot\mathfrak D = \Lambda(n)P(x,n)\right\rbrace$$
and ask for which values of $W(x)$ the above algebra is nontrivial.

By a result of Tirao and Gr\"unbaum \cite{grunbaum2007b}, the differential operator from \eqref{main problem equation operator form} can be taken to be symmetric with respect to $W(x)$, so that
\begin{equation}
\int (P(x)\cdot\mathfrak D)W(x)Q(x)^*dx = \int P(x)\cdot (Q(x)\cdot\mathfrak D)^*dx,\ \ \forall\ P(x),Q(x)\in M_N(\bbr(x)).
\end{equation}
Moreover, any differential operator $\mathfrak D$ of the form \eqref{main problem equation operator form} which is $W(x)$-symmetric in this fashion automatically satisfies \eqref{main problem equation operator form} for $\Lambda(n)$ given by \ref{lambda equation}.
We call the pair $(W(x),\mathfrak D)$ with $\mathfrak D$ a $W(x)$-symmetric differential operator a \vocab{matrix Bochner pair}.
Thus the $N\times N$ matrix Bochner problem may be thought of as the problem of classifying all $N\times N$ matrix Bochner pairs.

Very recently, the author and Yakimov \cite{casper2018} combined methods from integrable systems, algebraic geometry, bispectrality, and the representation theory of semiprime PI algebras to obtain a general classification for $N>1$ under mild hypotheses, in terms of noncommutative bispectral Darboux transformations of direct sums of classical weights.
In particular, the authors proved in the $2\times 2$ case that when the algebra $\mathcal D(W)$ is noncommutative the weight matrix $W(x)$ must come from a noncommutative bispectral Darboux transformation of a classical weight.
In this paper we carefully explore the $\Ad$-condition (see \eqref{ad condition 0} below) in order to \emph{explicity} classify the $2\times 2$ matrix weights with hypergeometric matrix differential operators in $\mathcal D(W)$.

\subsection{Bispectrality and the $\Ad$-condition}
The $N\times N$ matrix Bochner problem is a \vocab{bispectral problem} in the sense of \cite{bakalov}, as the (monic) orthogonal matrix polynomials are also eigenfunctions of a difference equation in the spectral parameter $n$ (valid for $n\geq 0$ with $C(0)=0$)
\begin{equation}
P(x,n+1) + B(n)P(x,n) + C(n)P(x,n-1) = xP(x,n).
\end{equation}
This equation is commonly referred to as the three-term recursion relation for $P(x,n)$ and is an immediate consequence of the pairwise orthogonality of the polynomials $P(x,n)$.
The sequences $B(n)$ and $C(n)$ are defined by
\begin{align}\label{bc equation}
B(n) = \left(\int xP(x,n)W(x) P(x,n)^* dx\right)M(n)^{-1},\ \ \ C(n) = M(n)M(n-1)^{-1}
\end{align}
where here $M(n)$ defines the norm squared value of $P(x,n)$ with respect to $W(x)$, ie.
\begin{equation}\label{m equation}
M(n) = \int P(x,n)W(x) P(x,n)^* dx.
\end{equation}
Note that the definitions of $M(n)$, $B(n)$, and $C(n)$ from \eqref{bc equation} and \eqref{m equation} above imply that $M(n) = C(n)C(n-1)\dots C(1)M(0)$ and that $M(n)$, $B(n)M(n)$, $\Lambda(n)M(n)$, $C(n)M(n)$, and $C(n+1)M(n)$ are all symmetric.

In terms of matrix-valued difference operators acting on the left, we can rewrite the three-term recursion relation as
\begin{equation}
\mathscr L\cdot P(x,n) = xP(x,n),\ \ \mathscr L = I\mathscr S + B(n) + C(n)\mathscr S^*,
\end{equation}
where here $\mathscr S$ is the standard shift operator $(\mathscr S\cdot P)(x,n) = P(x,n+1)$ and $\mathscr S^*$ is its adjoint.
From this point of view, the algebra $\mathcal D(W)$ is the algebra of bispectral differential operators analogous to Wilson's construction in the classical bispectral situation \cite{wilson}.
Bispectrality in particular implies that $\mathscr L$ and $\Lambda(n)$ satisfy the $\Ad$-condition
\begin{equation}\label{ad condition 0}
[\mathscr L,[\mathscr L,\Lambda(n)]] = 2a_2(\mathscr L)
\end{equation}
where here $[\cdot,\cdot]$ denotes the usual commutator bracket and we are viewing $\Lambda(n)$ as an operator acting on sequences of matrices by left multiplication.

\section{The $\Ad$-condition}
\subsection{Fundamental Calculations}
A sequence of monic orthogonal matrix polynomials $\{P(x,n)\}$ associated with an $N\times N$ matrix Bochner pair $(W,\mathfrak D)$ is bispectral in that
\begin{align}
\mathscr L\cdot P(x,n) &= P(x,n)x,\\
P(x,n)\cdot\mathfrak D &= \Lambda(n)P(x,n).
\end{align}
where here $\mathscr L$ is the difference operator
\begin{equation}
\mathscr L = \mathscr S + B(n) + C(n)\mathscr S^*
\end{equation}
obtained from  the $3$-term recursion relation for the $P(x,n)$.
The operators $\mathscr S$ and $\mathscr S^*$ are the forward shift operator and its adjoint which act on $N\times N$ matrix-valued functions $F(n): \bbn\rightarrow M_N(\bbc)$ by $(\mathscr S\cdot F)(n) = F(n+1)$, and $(\mathscr S^*F)(n) = F(n-1)$ with $F(-1):=0$.

The bispectral property of orthogonal matrix polynomials leads to a certain relationship between the coefficients of $\mathscr L$ and $\Lambda(n)$, known as the $\Ad$-condition.
For any difference operators $\mathscr M,\mathscr N$, we recurseively define
$$\Ad_{\mathscr M}^{k+1}(\mathscr N) = \Ad_{\mathscr M}(\Ad_{\mathscr M}^k(\mathscr N)),\ \ \text{with}\ \ \Ad_{\mathscr M}(\mathscr N) = \mathscr M\mathscr N - \mathscr N\mathscr M.$$
Then the $\Ad$-condition says
$$\Ad_{\mathscr L}^3(\Lambda(n)) = 0.$$
This follows from bispectrality along with the fact that $\Ad_x^3(\mathfrak D) = 0$.
In the special case that $\mathfrak D$ has scalar leading coefficient, ie:
\begin{equation}
\mathfrak D = \partial_x^2a_2(x)I + \partial_x A_1(x) + A_0
\end{equation}
for some $a_2(x) = a_{22}x^2 + a_{21}x + a_{20}$, $A_1(x) = A_{11}x + A_{10}$ with $a_{2i}\in\bbr$ and $A_{11},A_{10},A_0\in M_2(\bbr)$
the $\Ad$-condition can be reduced further.
In this case, $\Ad_x^2(\mathfrak D) = 2a_2(x)$ and therefore
$$2a_2(\mathscr L)\cdot P(x,n) = P(x,n)\cdot 2a_2(x) = P(x,n)\cdot \Ad_x^2(\mathfrak D) = \Ad_{\mathscr L}^2(\Lambda(n))\cdot P(x,n).$$
As a consequence, we obtain the (reduced) $\Ad$-condition
\begin{equation}\label{ad condition}
\Ad_{\mathscr L}^2(\Lambda(n)) = 2a_2(\mathscr L),\ \ \mathscr L = \mathscr S + B(n) + C(n)\mathscr S^*.
\end{equation}
Recall that $M(n) = \|P(x,n)\|_W^2$.
If we choose a factorization $\wt{\mathscr L} = M(n)^{-1/2}\mathscr L M(n)^{1/2}$ for each $n$ and define $\wt \Lambda(n) = M(n)^{-1/2}\mathscr L M(n)^{1/2}$, then $\wt{\mathscr L}$ represents a three-term recursion relation for a sequence of normalized orthogonal polynomials for $W(x)$, and in particular is symmetric.
The associated operator $\wt{\mathscr O} = [\wt{\mathscr L},\wt \Lambda(n)]$ is skew-symmetric and satisfies the \vocab{string equation}
\begin{equation}
[\wt{\mathscr L},\wt{\mathscr O}] = 2a_2(\wt{\mathscr L}).
\end{equation}

Note that $\mathscr S^*$ is not quite the inverse of $\mathscr S$, since $F(n) - (\mathscr S^*\mathscr S\cdot F)(n)= \delta_{n,0}F(0)$.
Thus $\mathscr S^*$ is only a right inverse of $\mathscr S$ and $\mathscr S$ is not quite unitary.
To fix this issue, we can extend the domain of the sequences under consideration to functions $F(n): \bbz\rightarrow \bbc$, so that the shift operators act by $(\mathscr SF)(n) = F(n+1)$ and $(\mathscr SF)(n) = F(n-1)$, respectively.
In this case $\mathscr S$ is unitary as an operator on $M_2(\bbc)\otimes \ell^2(\bbz)$.
For the remainder of the paper, our shift operators will always act explicitly on sequences with domain $\bbz$ and we will search for pairs $(\mathscr L,\Lambda(n))$ satisfying the $\Ad$-condition.

Equation \ref{ad condition} expands into
$$Z_2(n)\mathscr S^2 + Z_1(n)\mathscr S + Z_0(n) + Z_{-1}(n)\mathscr S^* + Z_{-2}(n)\mathscr S^{*2} = 0,$$
for some sequences $Z_i(n)$ whose values may be obtained explicitly via direct calculation.
Thus the $\Ad$-condition simply says $Z_i(n) = 0$ for all $n\geq 0$ and all $i=-2,\dots, 2$.

The vanishing condition for the $Z_i(n)$'s is repetitive in the sense that the expression for $Z_i(n)$ is zero if and only if $Z_{-i}(n)$ is zero because
\begin{align*}
Z_i(n)\|P(x,n+i)\|_W^2
  & = \langle[\Ad_{\mathscr L}^2(\Lambda(n))-2a_2(\mathscr L)]\cdot P(x,n),P(x,n+i)\rangle_W\\
  & = \langle P(x,n)\cdot [\Ad_x^2(\mathfrak D)-2a_2(x)I],P(x,n+i)\rangle_W\\
  & = \langle P(x,n),P(x,n+i)\cdot [\Ad_x^2(\mathfrak D)-2a_2(x)I]\rangle_W\\
  & = \langle P(x,n),[\Ad_{\mathscr L}^2(\Lambda(n))-2a_2(\mathscr L)]\cdot P(x,n+i)\rangle_W\\
  & = \|P(x,n)\|_W^2Z_{-i}(n)^*.
\end{align*}
Here we used the fact that $\mathfrak D$ is self-adjoint with respect to $W(x)$.
Furthermore, the value of $Z_2(n)$ evaluates to being identically zero, and therefore the vanishing condition reduces to the pair of equations $Z_1(n) = 0$ and $Z_0(n) = 0$.
Explicitly:
\begin{align}\label{B update equation}
0 &= -2a_{22}(n+1)B(n+1) + B(n+1)(A_{11}(n+1)+A_0) - (A_{11}(n+2)+A_0)B(n+1)\\\nonumber
       &+ 2a_{22}(n-1)B(n) + (A_{11}n+A_0)B(n) - B(n)(A_{11}(n-1)+A_0) - 2a_{21}I\\
\label{C update equation}
0 &= -2a_{22}(2n+1)C(n+1) + C(n+1)(A_{11}n+A_0) - (A_{11}(n+2)+A_0)C(n+1)\\\nonumber
       &+ 2a_{22}(2n-3)C(n) + (A_{11}n+A_0)C(n) - C(n)(A_{11}(n-2)+A_0)\\\nonumber
       &+ B(n)^2(A_{11}n+A_0) -2 B(n)(A_{11}n+A_0)B(n) + (A_{11}n+A_0)B(n)^2\\\nonumber
       &-2a_{22}B(n)^2-2a_{21}B(n)-2a_{20}I
\end{align}
We refer to these below as the \vocab{update equations for $B(n)$ and $C(n)$}, respectively.

\subsection{Linearization of the update equations}
The update equations for $B(n)$ and $C(n)$ allow us to determine $B(n+1)$ and $C(n+1)$ from $B(n)$ and $C(n)$.
In particular they show that for any choice of $\Lambda(n)$ and starting data $B(0)$, $C(1)$ there exist values $B(n)$ and $C(n)$ satisfying the $\Ad$-condition.
Furthermore, outside of certain exceptional situations, the sequences $B(n)$ and $C(n)$ are completely determined by $\Lambda(n)$ and the initial values $B(0)$ and $C(1)$.
The update equations in particular are linear, and by expressing them formally as linear systems we can obtain a closed-form solution for $B(n)$ in terms of the initial data.
The explicit expression in particular proves that, outside of certain exceptional cases, $B(n)$ is equal to a rational function in $n$ for almost every $n$.

To begin, let $\vec b(n)$ and $\vec c(n)$ vectors whose entries are the values of $B(n)$ and $C(n)$, in standard order and let $H(n) = nH_1 + H_0$ and $K(n) = nK_1 + K_0$ be the $N^2\times N^2$ matrices whose entries are defined by
\begin{align}
(H_1)_{(i-1)N+j,(k-1)N+\ell}(n) &= 2a_{22}\delta_{i,k}\delta_{j,\ell} - \delta_{i,k}(A_{11})_{\ell,j} + \delta_{j,\ell}(A_{11})_{i,k}\\
(H_0)_{(i-1)N+j,(k-1)N+\ell}(n) &=-2a_{22}\delta_{i,k}\delta_{j,\ell} + \delta_{i,k}(A_{11} - A_0)_{\ell,j} + \delta_{j,\ell}(A_0)_{i,k}\\
(K_1)_{(i-1)N+j,(k-1)N+\ell}(n) &= 4a_{22}\delta_{i,k}\delta_{j,\ell} - \delta_{i,k}(A_{11})_{\ell,j} + \delta_{j,\ell}(A_{11})_{i,k}\\
(K_0)_{(i-1)N+j,(k-1)N+\ell}(n) &=-6a_{22}\delta_{i,k}\delta_{j,\ell} + \delta_{i,k}(2A_{11}-A_0)_{\ell,j} + \delta_{j,\ell}(A_0)_{i,k}
\end{align}

Using these expressions, the update equations may be rewritten as
\begin{align}
H(n+2)\vec b(n+1) &= H(n)\vec b(n) - 2a_{21}\vec \sigma,\\
K(n+2)\vec c(n+1) &= K(n)\vec c(n) + \vec \theta(n)
\end{align}
where here $\vec \sigma$ and $\vec \theta(n)$ are vectors whose entries are
\begin{align*}
\vec\sigma_{iN+j} &= \delta_{ij}\\
\vec \theta(n)_{iN + j}
  & = + (B(n)^2(A_{11}n+A_0) -2 B(n)(A_{11}n+A_0)B(n) + (A_{11}n+A_0)B(n)^2)_{ij}\\
       &-2(a_{22}B(n)^2+a_{21}B(n)+a_{20}I)_{ij}
\end{align*}

The update equation for $\vec b(n)$ turns out to have a simple closed-form solution.
\begin{lem}\label{HK lemma}
Suppose that $H(n)$ is nonsingular for all integers $n\geq \ell$.
Then
\begin{equation*}
\vec b(n) = H(n)^{-1}H(\ell)H(n+1)^{-1}\left[H(\ell+1)\vec b(\ell) -2a_{21}(n-\ell)H\left(\frac{n+\ell+1}{2}\right)H(\ell)^{-1}\vec\sigma\right].
\end{equation*}
\end{lem}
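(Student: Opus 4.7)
The update equation
$$H(n+2)\vec b(n+1) = H(n)\vec b(n) - 2a_{21}\vec\sigma$$
is a first-order linear recurrence in $\vec b(n)$, but the $n$-dependent factors $H(n+2)$ and $H(n)$ on either side prevent a naive telescoping. The plan is to introduce an auxiliary sequence that converts it into a genuine difference equation with easy-to-sum right-hand side, and then invert. To that end I define
$$\vec u(n) := H(n+1)H(\ell)^{-1}H(n)\vec b(n),$$
so that the lemma's formula is equivalent to the closed form
$$\vec u(n) = H(\ell+1)\vec b(\ell) - 2a_{21}(n-\ell)\,H\!\left(\frac{n+\ell+1}{2}\right)H(\ell)^{-1}\vec\sigma.$$

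The engine of the argument is a commutativity identity for the linear pencil $H(m)=mH_1+H_0$: whenever $H(b)$ is invertible,
$$H(a)H(b)^{-1}H(c) = H(c)H(b)^{-1}H(a) \qquad \text{for all scalars } a,b,c.$$
I would prove this by expanding $H(a)H(b)^{-1} = I+(a-b)H_1H(b)^{-1}$, multiplying on the right by $H(c)$, and comparing with the symmetric expansion for $H(c)H(b)^{-1}H(a)$; after collecting terms the only asymmetry is $H(c)-H(a) = (c-a)H_1$ versus $(a-b)H_1 - (c-b)H_1 = (a-c)H_1$, and these cancel. This identity is the main obstacle; everything downstream is bookkeeping.

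Armed with commutativity, the derivation proceeds as follows. Applying the identity with $(a,b,c)=(n+2,\ell,n+1)$ rewrites
$$\vec u(n+1) = H(n+2)H(\ell)^{-1}H(n+1)\vec b(n+1) = H(n+1)H(\ell)^{-1}H(n+2)\vec b(n+1),$$
at which point the update equation replaces $H(n+2)\vec b(n+1)$ by $H(n)\vec b(n) - 2a_{21}\vec\sigma$, yielding the clean difference equation
$$\vec u(n+1) - \vec u(n) = -2a_{21}\,H(n+1)H(\ell)^{-1}\vec\sigma.$$

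Summing from $\ell$ to $n-1$ and using the initial value $\vec u(\ell) = H(\ell+1)\vec b(\ell)$, together with the elementary observation that $\sum_{k=\ell+1}^{n}H(k) = (n-\ell)\,H((n+\ell+1)/2)$ (valid because $H$ is linear in its argument, so the sum is $(n-\ell)$ times the average), produces the displayed closed form for $\vec u(n)$. Finally, the hypothesis that $H(n)$ is nonsingular for all $n\geq\ell$ allows one to invert and obtain $\vec b(n) = H(n)^{-1}H(\ell)H(n+1)^{-1}\vec u(n)$, which is precisely the formula asserted in the lemma.
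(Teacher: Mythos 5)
Your proof is correct, and it runs on the same engine as the paper's: the commutativity hidden in the linear pencil $H(m)=mH_1+H_0$. Your three-term identity $H(a)H(b)^{-1}H(c)=H(c)H(b)^{-1}H(a)$ is equivalent to the paper's key observation (writing the left side as $H(b)\cdot\bigl(H(b)^{-1}H(a)\bigr)\bigl(H(b)^{-1}H(c)\bigr)$ shows it amounts to the commutativity of $H(b)^{-1}H(a)$ and $H(b)^{-1}H(c)$, which the paper establishes by exhibiting all such factors as rational functions of the single matrix $H_\ell^{-1}H_1$). Where you genuinely diverge is in the bookkeeping: the paper first unrolls the recursion into a nested product of $H(k+2)^{-1}H(k)$ factors plus a double product--sum, and only then invokes commutativity to telescope everything down; you instead conjugate the recursion up front by the ``integrating factor'' $H(n+1)H(\ell)^{-1}H(n)$, so that $\vec u(n)$ satisfies a difference equation whose right-hand side is summed in one line, and the average-argument factor $H\bigl((n+\ell+1)/2\bigr)$ appears transparently from the linearity of $H$. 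Your route avoids the paper's product manipulations entirely and is the cleaner write-up; the paper's route has the minor advantage of producing the intermediate product formula for $\vec b(n)$ explicitly, which makes the rationality claim of Corollary \ref{B is rational} immediate. Both arguments use the nonsingularity hypothesis in exactly the same places, so there is no gap.
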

\begin{proof}
Note by assumption $H_\ell := H(\ell)$ is nonsingular.
For all $m\geq 0$
\begin{align*}
H(m+n+\ell)^{-1}H(m+\ell)
  & = H(m+n+\ell)^{-1}(H(m+n+\ell)-nH_1)\\
  & = I - nH(m+n+\ell)^{-1}H_1\\
  & = I - n[(n+m)(H_\ell^{-1}H_1) + I]^{-1}(H_\ell^{-1}H_1)
\end{align*}
This expression is a function of the single matrix $H_\ell^{-1}H_1$ and shows that $H(m+n+\ell)^{-1}H(m+\ell)$ and $H(m'+n'+\ell)^{-1}H(m'+\ell)$ commute for all $m,m',n,n'$ with $m+n\geq 0$ and $m'+n'\geq 0$.

The update equation for $\vec b(n)$ shows inductively that for all $n > \ell$
\begin{align*}
\vec b(n)
  & = \left(\prod_{k=\ell}^{n-1} H(k+2)^{-1}H(k)\right)\vec b(\ell)\\
  & - 2a_{21}\sum_{j=\ell}^{n-1} \left(\prod_{k=0}^{n-j-1} H(n+1-k)^{-1}H(n-1-k)\right)H(j)^{-1}\vec\sigma
\end{align*}
where the empty product is interpreted as the identity.
Note that by the commutativity relation above, the order in the product is not important.
Using this commutativity relation, the products telescope leading to the equation
\begin{align*}
\vec b(n)
  & = H(n)^{-1}H(\ell)H(n+1)^{-1}H(\ell+1)\vec b(\ell)\\
  & - 2a_{21}H(n)^{-1}H(\ell)H(n+1)^{-1}\left(\sum_{j=\ell}^{n-1} H(j+1)\right) H(\ell)^{-1}\vec\sigma,
\end{align*}
Evaluating the sum, we obtain the equation stated in the lemma.
\end{proof}
The values of the $B(n)$'s do not depend at all on the values of the $C(n)$'s.
Moreover, the above implies that the entries of $B(n)$ must be rational in $n$.
\begin{cor}\label{B is rational}
Suppose that $H(n)$ is nonsingular for all integers $n\geq \ell$.
The sequence $B(n)$ is a rational function of $n$ for all $n\geq\ell$.
\end{cor}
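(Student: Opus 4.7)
The plan is to invoke Lemma~\ref{HK lemma} directly; the corollary should follow with essentially no additional work beyond unpacking what the closed-form expression says about the $n$-dependence of $\vec b(n)$.

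First I would observe that by hypothesis, $H(n)$ is nonsingular at every integer $n \geq \ell$, so the closed-form expression from Lemma~\ref{HK lemma} is valid there. Next I would note that $H(n) = nH_1 + H_0$ is a matrix-valued polynomial in $n$ of degree one, hence $\det H(n)$ is a scalar polynomial in $n$. By Cramer's rule, the entries of $H(n)^{-1}$ are therefore rational functions of $n$, well-defined wherever $\det H(n) \neq 0$. The nonsingularity hypothesis guarantees $\det H(n) \neq 0$ at the integer values $n \geq \ell$; note that as a polynomial in a continuous variable $n$, $\det H(n)$ may still have zeros at non-integer points, but that is harmless for a rational-function claim.

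Combining these observations with the formula
$$\vec b(n) = H(n)^{-1}H(\ell)H(n+1)^{-1}\left[H(\ell+1)\vec b(\ell) -2a_{21}(n-\ell)H\!\left(\tfrac{n+\ell+1}{2}\right)H(\ell)^{-1}\vec\sigma\right],$$
I would simply note that each factor depending on $n$ -- namely $H(n)^{-1}$, $H(n+1)^{-1}$, the scalar $(n-\ell)$, and $H((n+\ell+1)/2)$ -- is rational in $n$, while $H(\ell)$, $H(\ell+1)$, $\vec b(\ell)$, $H(\ell)^{-1}$, and $\vec\sigma$ are all constants independent of $n$. Since rational functions in $n$ are closed under addition, multiplication, and scalar multiplication, every entry of $\vec b(n)$ is a rational function of $n$, which is equivalent to the entries of $B(n)$ being rational in $n$.

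There is essentially no obstacle: the technical work has already been absorbed into Lemma~\ref{HK lemma}, whose closed form is exactly what makes this rationality statement transparent. The only minor subtlety worth flagging in the write-up is the distinction between nonsingularity at integer arguments (what the hypothesis gives) and nonvanishing of $\det H(n)$ as a polynomial in a continuous variable, but as explained above this plays no role in the rational-function conclusion.
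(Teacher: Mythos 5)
Your proposal is correct and follows exactly the same route as the paper, which simply states that the corollary is clear from the closed-form formula in Lemma~\ref{HK lemma}; you have merely made explicit the Cramer's-rule observation that the paper leaves implicit. No further comment is needed.
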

\begin{proof}
This is clear from the formula for $B(n)$ determined above.
\end{proof}
The values of the $C(n)$'s depend on the values of both the $B(n)$'s and $C(n)$'s, with the $C(n)$-update equation in particular depending quadratically on $B(n)$.
For this reason, a simple expression for $C(n)$ is not immediately apparent, nor is it clear whether $C(n)$ must also be rational or not.
Our strategy in finding the value of $C(n)$ below will instead rely on inferences determined by certain symmetry conditions described in the next section.

\subsection{Symmetry conditions}
In addition to the update formulas for $B(n)$ and $C(n)$, the values of $B(n)$ and $C(n)$ satisfy certain symmetry conditions.
These symmetry conditions in turn impose restrictions on the allowed values of $B(0),C(1),A_{11},A_0,$ and $a_2(x)$.

To begin, let $M(n) = \langle P(x,n),P(x,n)\rangle_W$ for $n\geq 0$.
Note that $M(n)$ is positive definite and symmetric for all $n$.
\begin{lem}\label{lambda symmetry}
For all $n\geq 0$, the value of $\Lambda(n)M(n)$ is $*$-symmetric.
\end{lem}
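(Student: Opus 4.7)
The plan is a short direct computation that moves the eigenvalue $\Lambda(n)$ across the $W$-inner product $\langle\cdot,\cdot\rangle_W$ using the symmetry hypothesis on $\mathfrak{D}$. The key observation is that because $\Lambda(n)$ is a constant matrix in $x$, it may be pulled under the integral sign defining $M(n)$, which converts the left multiplication by $\Lambda(n)$ into the action of $\mathfrak{D}$ on $P(x,n)$.

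Concretely, I first write
\begin{equation*}
\Lambda(n)M(n) = \int \Lambda(n)P(x,n)W(x)P(x,n)^*\,dx = \int \bigl(P(x,n)\cdot\mathfrak{D}\bigr) W(x) P(x,n)^*\,dx = \langle P(x,n)\cdot\mathfrak{D},\, P(x,n)\rangle_W,
\end{equation*}
using the eigenvalue equation $P(x,n)\cdot\mathfrak{D} = \Lambda(n)P(x,n)$. Then I apply the $W$-symmetry of $\mathfrak{D}$, specialized to $P=Q=P(x,n)$, and unpack the inner product on the other side to obtain
\begin{equation*}
\Lambda(n)M(n) = \langle P(x,n),\, P(x,n)\cdot\mathfrak{D}\rangle_W = \int P(x,n) W(x) \bigl(\Lambda(n)P(x,n)\bigr)^*\,dx = M(n)\Lambda(n)^*.
\end{equation*}

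Finally, since $M(n)$ is Hermitian (it is positive definite as the Gram matrix of $P(x,n)$ against itself), taking the conjugate transpose of $\Lambda(n)M(n) = M(n)\Lambda(n)^*$ gives $(\Lambda(n)M(n))^* = M(n)\Lambda(n)^* = \Lambda(n)M(n)$, which is the claimed $*$-symmetry. There is no substantive obstacle: the only step that requires any comment at all is pulling the constant matrix $\Lambda(n)$ under the integral sign, which is legitimate precisely because $\Lambda(n)$ is independent of $x$; everything else is a bookkeeping unraveling of the definitions of $M(n)$, $\Lambda(n)$, and the $W$-symmetry of $\mathfrak{D}$.
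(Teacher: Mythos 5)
Your proposal is correct and follows essentially the same route as the paper: both use the eigenvalue equation to identify $\Lambda(n)M(n)$ with $\langle P(x,n)\cdot\mathfrak D, P(x,n)\rangle_W$, invoke the $W$-symmetry of $\mathfrak D$ to get $M(n)\Lambda(n)^*$, and conclude via the Hermiticity of $M(n)$. The only difference is that you spell out the final conjugate-transpose step, which the paper leaves implicit.
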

\begin{proof}
Since $(W,\mathfrak D)$ is a Bochner pair, we know in particular that $\mathfrak D$ is $W$-symmetric.
Therefore
$$\Lambda(n)M(n) = \langle P(x,n)\cdot\mathfrak D, P(x,n)\rangle_W = \langle P(x,n), P(x,n)\cdot\mathfrak D\rangle_W = M(n)\Lambda(n)^*.$$
\end{proof}

\begin{lem}\label{B symmetry}
For all $n\geq 0$, the value of $B(n)M(n)$ is $*$-symmetric and satisfies
\begin{equation}
B(n)M(n)= \langle x P(x,n),P(x,n)\rangle_W
\end{equation}
\end{lem}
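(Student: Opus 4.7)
The plan is to prove both statements by a direct computation from the three-term recursion and the symmetry of the weight $W(x)$.

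First I would establish the identity $B(n)M(n) = \langle xP(x,n), P(x,n)\rangle_W$. Starting from the three-term recursion
$$xP(x,n) = P(x,n+1) + B(n)P(x,n) + C(n)P(x,n-1)$$
(where $C(0) = 0$ so the formula is valid for all $n \geq 0$), I would take the inner product $\langle \cdot, P(x,n)\rangle_W$ on both sides. Since $B(n)$ and $C(n)$ are matrix constants with respect to $x$, they factor out on the left. Pairwise orthogonality of the monic orthogonal polynomials gives $\langle P(x,n+1), P(x,n)\rangle_W = 0$ and $\langle P(x,n-1), P(x,n)\rangle_W = 0$, so only the middle term survives:
$$\langle xP(x,n), P(x,n)\rangle_W = B(n) \langle P(x,n), P(x,n)\rangle_W = B(n)M(n),$$
which is the claimed identity. (Equivalently, this simply recovers the defining formula \eqref{bc equation} for $B(n)$, confirming consistency.)

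Next I would prove $*$-symmetry. Because the weight matrix $W(x)$ is Hermitian on its support and the scalar $x$ is real,
$$\bigl(xP(x,n)\,W(x)\,P(x,n)^*\bigr)^* = x\,P(x,n)\,W(x)^*\,P(x,n)^* = x\,P(x,n)\,W(x)\,P(x,n)^*.$$
Hence the integrand is pointwise $*$-symmetric, and therefore so is
$$B(n)M(n) = \int x\,P(x,n)\,W(x)\,P(x,n)^*\,dx.$$

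No significant obstacle arises here: the argument is a one-line application of the orthogonality of the $P(x,n)$ together with the Hermiticity of $W(x)$. The only thing to be mindful of is the boundary case $n = 0$, handled by the convention $C(0) = 0$, which makes the three-term recursion applicable for all $n \geq 0$.
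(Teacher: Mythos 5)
Your proof is correct and follows essentially the same route as the paper: both derive the identity $B(n)M(n)=\langle xP(x,n),P(x,n)\rangle_W$ from the three-term recursion together with orthogonality, and both get $*$-symmetry from the Hermiticity of $W(x)$ and the realness of $x$. The only cosmetic difference is that you observe the integrand is pointwise Hermitian, whereas the paper moves $x$ across the inner product to read off $B(n)M(n)=M(n)B(n)^{*}$ and then invokes the symmetry of $M(n)$; these are the same fact.
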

\begin{proof}
Since $xP(x,n) = P(x,n+1) + B(n)P(x,n) + C(n)P(x,n-1)$ we see that
$$B(n)M(n) = \langle xP(x,n),P(x,n)\rangle_W = \langle P(x,n), xP(x,n)\rangle_W = M(n)B(n)^*.$$
\end{proof}

\begin{lem}\label{C symmetry}
For all $n\geq 0$ we have $M(n) = C(n)C(n-1)\dots C(1)M(0)$ and both $C(n+1)M(n)$ and $C(n)M(n)$ are $*$-symmetric.
\end{lem}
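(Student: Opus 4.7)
The plan is to extract everything from the three-term recursion relation together with the basic Hermitian nature of $M(n)$, mirroring the approach used in Lemmas \ref{lambda symmetry} and \ref{B symmetry}. The key observation is that the recursion $xP(x,n-1) = P(x,n) + B(n-1)P(x,n-1) + C(n-1)P(x,n-2)$ lets us compute the mixed inner product $\langle xP(x,n-1), P(x,n)\rangle_W$ in two ways.

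First, I would compute $\langle xP(x,n), P(x,n-1)\rangle_W$ directly using the recursion for $xP(x,n)$: only the $C(n)P(x,n-1)$ term survives orthogonality, so this inner product equals $C(n)M(n-1)$. Second, by moving the scalar $x$ to the other side and using the recursion for $xP(x,n-1)$, only the $P(x,n)$ term survives, giving $M(n)$. Equating the two expressions yields
\begin{equation*}
C(n)M(n-1) = M(n).
\end{equation*}
Iterating this identity from $n$ down to $0$ gives the product formula $M(n) = C(n)C(n-1)\cdots C(1)M(0)$.

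For the symmetry assertions, I would first observe that $M(n)$ is itself $*$-symmetric: since $W(x)$ is Hermitian, $M(n)^* = \int P(x,n) W(x)^* P(x,n)^* dx = M(n)$. Applying the identity above, $C(n+1)M(n) = M(n+1)$, which is therefore $*$-symmetric. For the second claim, the identity $C(n) = M(n)M(n-1)^{-1}$ (obtained from $C(n)M(n-1) = M(n)$) gives
\begin{equation*}
C(n)M(n) = M(n)M(n-1)^{-1}M(n),
\end{equation*}
and the right-hand side is manifestly $*$-symmetric because each $M(k)$ (and hence $M(k)^{-1}$) is Hermitian.

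There is no real obstacle here: the argument is a direct computation using the recursion and the symmetry of the weight. The only point requiring any care is that each $M(k)$ must be invertible, which holds because $W(x)$ is positive-definite on its support and thus induces a nondegenerate inner product, forcing $M(n) = \langle P(x,n), P(x,n)\rangle_W$ to be a positive-definite (hence invertible) Hermitian matrix for every $n$.
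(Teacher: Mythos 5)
Your proof is correct and follows essentially the same route as the paper: both derive $C(n)M(n-1)=M(n)$ by evaluating $\langle xP(x,n),P(x,n-1)\rangle_W$ two ways via the three-term recursion, iterate to get the product formula, and then use the Hermitian nature of the $M(k)$'s to conclude symmetry of $C(n+1)M(n)$ and $C(n)M(n)$ (your $M(n)M(n-1)^{-1}M(n)$ is the same expression the paper writes as $C(n)M(n-1)C(n)^*$). Your explicit remark on the invertibility of $M(n)$ is a welcome touch of care that the paper leaves implicit.
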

\begin{proof}
Since $xP(x,n) = P(x,n+1) + B(n)P(x,n) + C(n)P(x,n-1)$ we see that for all $n\geq 1$
$$C(n)M(n-1) = \langle xP(x,n),P(x,n-1)\rangle_W = \langle P(x,n), xP(x,n-1)\rangle_W = M(n).$$
Therefore $C(n) = M(n)M(n-1)^{-1}$ and our formula for $M(n)$ follows immediately.
Since $M(n)$ is $*$-symmetric, we get that $C(n+1)M(n) = M(n+1)$ is $*$-symmetric for all $n$.
This in particular implies that $C(n)M(n-1) = M(n-1)C(n)^*$ and therefore
$$C(n)M(n) = C(n)C(n)M(n-1) = C(n)M(n-1)C(n)^*.$$
This latter expression is $*$-symmetric, and therefore $C(n)M(n)$ is $*$-symmetric.
\end{proof}

Note that if we multiply the $C(n)$-update equation on the right by $M(n)$ and then take the symmetric part
we obtain
\begin{align}\label{M update equation}
 &2a_{22}(2n+1)M(n+1) + (A_{11}M(n+1)+M(n+1)A_{11}^*)\\\nonumber
  & = M(n)\left(2a_{22}(2n-3)M(n-1)^{-1} + M(n-1)^{-1}A_{11} + A_{11}^*M(n-1)^{-1}\right)M(n)\\\nonumber
  & + [B(n),[B(n),\Lambda(n)]]M(n) - 2a_2(B(n))M(n).
\end{align}
We call this the \vocab{update equation for $M(n)$}.
Note that unlike the update equations for $B(n)$ and $C(n)$, the update equation for $M(n)$ is nonlinear.

\section{Isospectral deformations}
In the case of a scalar weight $\rho(x)$ on $\bbr$, isospectral deformations of $\rho(x)$ correspond to deformations of the form
\begin{equation}\label{simple deformation}
\rho(x;\vec t) = \rho(x)\exp\left(\sum_{n=1}^\infty t_nx^n\right)
\end{equation}
where here $\vec t=(t_1,t_2,\dots)$ is the deformation parameter.
Such a deformation is isospectral in the sense that the three-term recursion relation describing the orthogonal polynomials of $\rho(x;\vec t)$
$$\mathscr L_{\vec t}\cdot p(x,n;\vec t) = xp(x,n;\vec t),\ \ \mathscr L_{\vec t} = \mathscr S + a(n;\vec t) + b(n;\vec t)\mathscr S^*$$
preserves the spectral parameter $x$.
Furthermore, the deformation equations for $a(n;\vec t)$ and $b(n;\vec t)$ are determined by solutions of the infinite Toda hierarchy
$$\frac{\partial}{\partial t_n}\mathscr L_{\vec t} = \frac{1}{2}[(\mathscr L^n)_s,\mathscr L],$$
satisfying certain symmetries insuring that the deformed values $\mathscr L_{\vec t}$ remain having only three terms \cite{adler}.
Here for any shift operator
$$\mathscr M = \sum_{k=1}^\ell a_k(n)\mathscr S^k + a_0(n) + \sum_{k=1}^{\infty}a_{-k}(n)(\mathscr S^*)^k,$$
the expression $\mathscr M_s$ denotes the skew symmetric operator
$$\mathscr M = \sum_{k=1}^\ell (a_k(n)\mathscr S^k + a_k(n-k)(\mathscr S^*)^k).$$

In the matrix case, we mimic this same definition and consider isospectral deformations of $W(x)$ as deformations of the corresponding three-term recursion relation preserving the spectral parameter
$$\mathscr L_{\vec t}\cdot P(x,n;\vec t) = xP(x,n;\vec t),\ \ \mathscr L_{\vec t} = \mathscr S + A(n;\vec t) + B(n;\vec t)\mathscr S^*.$$
This is more complicated than the scalar case, due to the existence of \emph{noncommutative} isospectral deformations, ie. deformations $W(x;\vec t)$ which do not commute with $W(x;0)$, so that $W(x;\vec t)$ is not a scalar multiple of $W(x)$.
Furthermore, a suitably general replacement of the Toda hierarchy and its relevant symmetries must still be worked out.
This is further complicated by the fact that in this paper, we are interested specifically in \emph{bispectral} isospectral deformations, ie. isospectral deformations which preserve the property of the existence of a polynomial $\Lambda(n,\vec t) = -n^2I + \Lambda_1(\vec t)n + \Lambda_0(\vec t)$ satisfying the $\Ad$-condition
$$\Ad_{\mathscr L_{\vec t}}^2(\Lambda(n;\vec t)) = 2(I-\mathscr L_{\vec t}^2).$$
In such a case, the associated orthogonal polynomials $P(x,n;k)$ will satisfy a second-order matrix-valued hypergeometric differential equation
$$(1-x^2)\frac{d^2}{dx^2}P(x,n;k) + \frac{d}{dx}P(x,n;k)A_1(x;k) + P(x,n;k)A_0(k) = P(x,n;k)\Lambda(n;k).$$
As such, the deformations that we construct here will be ad-hoc and without the benefit of a general theoretical framework.
Indeed, every smooth curve in the classifying space $\mathcal E$ of Theorem \ref{classifying space} defines a bispectrality-preserving isospectral deformation, so a simple theoretical framework allowing for the explicit construction of all such deformations would solve our original problem immediately.

The simplest example of an isospectral deformation preserving the $\Ad$-condition is the translation
\begin{align*}
\mathscr L_k &= \mathscr SI + B(n+k) + C(n+k)(\mathscr S)^*,\\
\Lambda(n;k) &= \Lambda(n+k).
\end{align*}
After conjugating and translating the associated matrix Bochner pair to get $A_{11}$ and $A_0$ in the form \eqref{A equation}, we obtain the isospectral deformation described by \eqref{translation deformation}.

More examples of isospectral deformations preserving the $\Ad$-condition arise from isomonodromic deformations of the hypergeometric equation satisfied by $W(x)$ fixing the poles, as we next describe.
\subsection{Isomonodromic deformations}
The pair $(W(x),\mathfrak D)$ is a matrix Bochner pair if and only if $W(x)$ is a weight matrix and satisfies the differential equation
\begin{equation}\label{fundamental differential equation}
(W(x)a_2(x))'' - (W(x)A_1(x)^*)' + WA_0^* = A_0W
\end{equation}
along with the two boundary conditions that
\begin{equation}\label{boundary conditions}
a_2(x)W(x)\rightarrow 0,\ \ \text{and}\ \ (a_2(x)W(x))' - A_1(x)W(x)\rightarrow 0\ \ \text{as}\ \ x\rightarrow\partial\supp(W).
\end{equation}
In particular if $(W(x),\mathfrak D)$ is a hypergeometric matrix Bochner pair, then $W(x)$ is itself a solution of a hypergeometric equation.

Setting the two boundary conditions aside for now, the equation \eqref{fundamental differential equation} naturally decomposes into symmetric and skew-symmetric components.
The symmetric component is the noncommutative Pearson equation
\begin{equation}\label{Pearson equation}
2(W(x)a_2(x))' = A_1(x)W(x) + W(x)A_1(x)^*.
\end{equation}
The skew-symmetric component reduces to the following ordinary differential equation we will refer to as the \vocab{auxillary equation}
\begin{equation}\label{aux equation}
(W(x)A_1(x)^*-A_1(x)W(x))' = 2(WA_0^*-A_0W)
\end{equation}
To summarize, we have the following theorem
\begin{thm}[\cite{duran2004}]\label{differential condition}
A pair $(W(x),\mathfrak D)$ is a matrix Bochner pair if and only if it satisfies the noncommutative Pearson equation \eqref{Pearson equation}, the auxillary equation \eqref{aux equation}, and the boundary conditions \eqref{boundary conditions}.
\end{thm}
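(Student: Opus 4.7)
The plan is to unfold the $W$-symmetry condition
$$\int (P\cdot\mathfrak D)\,W\,Q^*\,dx = \int P\,W\,(Q\cdot\mathfrak D)^*\,dx,$$
required for every pair of matrix-valued polynomials $P,Q$, into an equivalent pointwise matrix identity on $\supp(W)$ together with the stated boundary behaviour. The strategy is: transfer both derivatives off $P$ by integration by parts and then equate the resulting coefficients of $Q^*$ and $Q'^*$ on the two sides.

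Explicitly I would substitute $P\cdot\mathfrak D = P'' a_2 + P' A_1 + P A_0$ and $(Q\cdot\mathfrak D)^* = a_2 Q''^* + A_1^* Q'^* + A_0^* Q^*$, integrate by parts twice on the $P''$-term and once on the $P'$-term, and observe that the $Q''^*$-contributions on the two sides cancel because the scalar $a_2$ commutes with $W$. The symmetry identity then reduces to
\begin{align*}
0 &= \bigl[P'(a_2 W) Q^* + P(A_1 W - (a_2 W)') Q^* - P(a_2 W) Q'^*\bigr]_{\partial\supp(W)}\\
  &\quad + \int P\bigl\{[(a_2 W)'' - (A_1 W)' + A_0 W - W A_0^*]\,Q^* + [2(a_2 W)' - A_1 W - W A_1^*]\,Q'^*\bigr\}\,dx.
\end{align*}
Taking $P$ an arbitrary matrix polynomial, polynomial density in weighted $L^2$ forces the matrix-valued bracket inside the integrand to vanish identically on $\supp(W)$ for every polynomial $Q$; then specialising $Q$ to a constant and to $xI$ in turn isolates the two matrix coefficients of $Q^*$ and $Q'^*$ and forces each to vanish pointwise. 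A parallel test-function argument handles the boundary bracket separately.

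The coefficient of $Q'^*$ is exactly the noncommutative Pearson equation \eqref{Pearson equation}. The coefficient of $Q^*$ reads $(a_2 W)''-(A_1 W)'+A_0 W = W A_0^*$; taking its $*$-adjoint (using $W=W^*$ and $a_2\in\bbr[x]$) yields precisely \eqref{fundamental differential equation}, and splitting that identity into its Hermitian and skew-Hermitian parts (with the integration constant produced by the Hermitian part eliminated by the boundary behaviour) reproduces the Pearson equation and the auxillary equation \eqref{aux equation} respectively. Vanishing of the boundary bracket for arbitrary polynomial $P,Q$ forces the two limits $a_2 W\to 0$ and $(a_2 W)'-A_1 W\to 0$ at $\partial\supp(W)$, i.e.\ \eqref{boundary conditions}. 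The converse direction is immediate on reading the same chain of equalities backwards. The main technical point to verify with care is the density step that extracts pointwise identities from the vanishing of the bilinear form against arbitrary polynomial test pairs; this is routine but relies on the finite-moment hypothesis on $W$ to guarantee convergence and polynomial density.
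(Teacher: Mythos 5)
There is no internal proof to compare against here: the paper states this result as a quotation from \cite{duran2004} and gives no argument of its own. Your integration-by-parts derivation is nonetheless the standard route to this equivalence (and is essentially the argument of the cited source). The bookkeeping is correct: the $Q''^*$ contributions cancel because $a_2$ is scalar, the coefficient of $Q'^*$ is exactly \eqref{Pearson equation}, and the adjoint of the coefficient of $Q^*$ is \eqref{fundamental differential equation}, whose skew-Hermitian part is \eqref{aux equation} and whose Hermitian part is the derivative of \eqref{Pearson equation}. The backwards implication is indeed immediate.

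The one genuine gap is the decoupling of the boundary bracket from the bulk integral, which you dispatch with ``a parallel test-function argument.'' These two contributions are not independent a priori: symmetry only tells you that their \emph{sum} vanishes for every pair $(P,Q)$, and both depend on $P$ and $Q$. The standard fix is to kill the boundary term first, by replacing $P$ with $p(x)P$ for scalar polynomials $p$ vanishing to order two at the endpoints of $\supp(W)$; this isolates the bulk identity $\int pP\{\cdots\}\,dx=0$, from which pointwise vanishing follows only after a determinacy argument. On $(-1,1)$ that is Weierstrass approximation; for unbounded support the finite-moment hypothesis you invoke is \emph{not} sufficient (finite moments do not imply moment determinacy), so the general statement needs additional care there. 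Only once the two differential identities are in hand can one return to the boundary bracket, and in the forward direction the \emph{existence} of the limits in \eqref{boundary conditions} is itself something to be established from the differential equations constraining $W$ near the endpoints, not merely their vanishing. None of this changes the shape of your argument, but the order of operations matters: bulk first, boundary second.
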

Note that since the weight matrix $W(x)$ is symmetric, the above system of equations is over-determined and only in special instances can we find a solution of the noncommutative Pearson equation which also satisfies the auxillary equation.

The solution space of the Pearson equation \eqref{Pearson equation} is always $N^2$-dimensional and generically the subspace of solutions also satisfying the auxillary equation \eqref{aux equation} is trivial.
In special instances, however, this subspace is nontrivial and proper.
When the zeros of $\det(A_1(x))$ are away from $-1$ and $1$, this also means that the action of the monodromy group of \eqref{Pearson equation} will preserve this subspace, forcing the corresponding $N^2$-dimensional representation of the monodromy group to be reducible.

In the two-dimensional case, the subspace of solutions of \eqref{Pearson equation} satisfying \eqref{aux equation}, when nontrivial, is typically one dimensional.
This forces the action of the monodromy group to act by scalars on $W(x)$, as is reflected in the explicit form of our expressions for $W(x)$ found in Theorem \ref{classification theorem}.

Generically we expect the representation corresponding to the monodromy group of \eqref{Pearson equation} to be irreducible, so the fact that weights belong to the class where it reduces strongly suggests isomonodromic deformations as a natural candidate for transforming hypergeometric matrix Bochner pairs into new ones.

In the one-dimensional case, the weight is (up to a constant) $(1-x)^\alpha(1+x)^\beta$ and there are no interesting isomonodromic deformations, because both the solution and the monodromy action are determined by the exponents $\alpha$ and $\beta$.
However, in the matrix case interesting isomonodromic deformations actually exist.
This is hinted at from the form of $W(x)$ in Theorem \ref{classification theorem}, ie. $W(x) = (1-x)^\alpha(1+x)^\beta Q(x)$ for some matrix-valued rational function $Q(x)$.
The action of the monodromy group again is determined by $\alpha$ and $\beta$.
However, the classifying space provides three and four-dimensional spaces of solutions and therefore contains smooth paths fixing the exponents $\alpha$ and $\beta$ and therefore the monodromy action.

The equation \eqref{Pearson equation} may be rephrased in terms of a resonant Fuchsian equation, and \emph{some} of the isomonodromic deformations of this latter equation define bispectrality-preserving isospectral deformations of $W(x)$.
A more explicit description of which isomonodromic deformations define such bispectrality preserving isospectral deformations has been considered by the author and will be the topic of future work.

\section{The two-dimensional case}
\subsection{The exceptional cases}
For the rest of the paper, we consider exclusively the case when $W(x)$ is a $2\times 2$ weight matrix.
In this case, we can take $\Lambda(n)$ to have the form specified by \eqref{main problem equation operator form} with $A_{11}$ and $A_0$ of the form \eqref{A equation} in the introduction, ie.
\begin{equation*}
A_{11} = \mxx{\lambda+d}{b+c}{b-c}{\lambda-d},\ \ A_0 = \mxx{a}{0}{0}{-a}.
\end{equation*}
Our ultimate goal is to determine the values of $a,b,c,d,\lambda$ and $B(0)$ for which the $B(n)$ and $C(n)$-update equations provide a solution of the $\Ad$-condition \eqref{ad condition}.
If $H(n)$ is nonsingular for infinitely many values of $n$ (equivalently for almost every value of $n$), then there will exist an integer $\ell>0$ such that $B(n)$ is determined from $B(\ell)$ for all $n\geq\ell$ by Lemma \ref{HK lemma}.
Moreover, the value of $B(n)$ determines the value of $M(n)$ up to a scalar, and these values in turn must satisfy the $M(n)$ update equation.
However, the update equation for $M(n)$ is complicated and nonlinear and for a generic choice of the values of $a_{22}$, $A_{11}$, $A_0$ and $B(\ell)$, the associated values of $B(n)$ contradict the $M(n)$-update equation.

Inspired by this, we are lead to consider specifically the case when the $4\times 4$ matrices $H(n)$ are singular for all $n$, so that $B(n)$ is not determined from some value $B(\ell)$ for all $n\geq \ell$.

In the $2\times 2$ case, the value of $H(n)$ is given by
\begin{align*}
H(n) &=
\left(\begin{array}{cccc}
2a_{22} & c-b        & c+b        & 0\\
-(c+b)  & 2a_{22}+2d & 0          & c+b\\
-(c-b)  & 0          & 2a_{22}-2d & c-b\\
0       &-(c-b)      & -(c+b)     & 2a_{22}
\end{array}\right)n\\
&+
\left(\begin{array}{cccc}
y-2a_{22} + d & -(c-b)          & 0              & 0\\
c+b           & y-2a_{22}+2a-d  & 0              & 0\\
0             & 0               & y-2a_{22}-2a+d & -(c-b)\\
0             & 0               & c+b            & y-2a_{22}-d
\end{array}\right)
\end{align*}
The next lemma shows that for generic $n$ the matrix $H(n)$ is nonsingular, except in very special cases.
\begin{lem}[Exceptional cases]\label{exceptional lemma}
The determinant of $H(n)$ is nonzero for almost every $n$, unless one of the following conditions holds
\begin{enumerate}[(i)]
\item  $a_{22} = 0$, $\lambda^2 = b^2-c^2+d^2$, $a=0$
\item  $a_{22} = 0$, $b=\pm c$, $\lambda = d$,
\item  $a_{22} = 0$, $b=\pm c$, $\lambda =-d$,
\item  $a_{22} = d$, $b =\pm c$, $\lambda = d+2a$
\item  $a_{22} =-d$, $b =\pm c$, $\lambda =-(d+2a)$
\end{enumerate}
\end{lem}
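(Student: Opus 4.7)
The plan is to exploit a Sylvester-operator structure hidden inside $H(n)$ and reduce the question to a spectral coincidence. A direct computation from the formulas defining $H_1$ and $H_0$ shows that under the vectorization identification of $\bbc^4$ with $M_2(\bbc)$, the map $H(n)$ acts as
\[
H(n)B \;=\; P(n)\,B \;-\; B\,Q(n),
\]
where $P(n) = nA_{11} + A_0 + 2a_{22}(n-1)I$ and $Q(n) = (n-1)A_{11} + A_0$. Since the Sylvester-type operator $B\mapsto PB-BQ$ on $M_2(\bbc)$ has spectrum $\{\rho_i-\sigma_j\}_{i,j}$ where $\rho_i,\sigma_j$ are the eigenvalues of $P,Q$, we obtain
\[
\det H(n) \;=\; \prod_{i,j\in\{+,-\}}\bigl(\rho_i(n)-\sigma_j(n)\bigr) \;=\; \tn{Res}_x\!\bigl(\chi_{P(n)}(x),\chi_{Q(n)}(x)\bigr).
\]
This resultant is a polynomial in $n$ of degree at most $4$; it vanishes identically (equivalently, for almost every $n$) iff $\chi_{P(n)}$ and $\chi_{Q(n)}$ share a common root as polynomials over $\overline{\bbc(n)}$, iff $\rho_\epsilon(n)=\sigma_{\epsilon'}(n)$ as algebraic functions of $n$ for some sign choice $\epsilon,\epsilon'\in\{\pm\}$.

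Using $A_{11}$ and $A_0$ as in \eqref{A equation}, one finds $\rho_\pm(n) = n\lambda + 2a_{22}(n-1) \pm \Delta_P(n)$ and $\sigma_\pm(n) = (n-1)\lambda\pm\Delta_Q(n)$, where
\[
\Delta_P(n)^2 = (nd+a)^2 + n^2(b^2-c^2),\qquad \Delta_Q(n) = \Delta_P(n-1).
\]
The coincidence equation $\rho_\epsilon(n) = \sigma_{\epsilon'}(n)$ rearranges to $\epsilon'\Delta_Q(n)-\epsilon\Delta_P(n) = \lambda + 2a_{22}(n-1)$. Squaring this identity and isolating $\Delta_P\Delta_Q$ forces $\Delta_P(n)^2\Delta_Q(n)^2$ to be a perfect square in $\bbc[n]$. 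Since $\Delta_P^2$ has discriminant $4a^2(c^2-b^2)$ in $n$, the perfect-square requirement forces either $a=0$ or $b^2=c^2$.

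These two regimes are then analyzed separately. In the case $b^2 = c^2$, one has $\Delta_P(n) = \pm(nd+a)$ and $\Delta_Q(n) = \pm((n-1)d+a)$, and matching coefficients of $n$ on both sides of the coincidence equation across the four sign pairings recovers the four families (ii)--(v) of the lemma: the pairings $\epsilon=\epsilon'$ force $a_{22}=0$ with $\lambda=\mp d$ (cases (ii)--(iii)), while the pairings $\epsilon\neq \epsilon'$ force $a_{22}=\pm d$ with $\lambda =\pm(d+2a)$ (cases (iv)--(v)). In the case $a=0$ with $b^2\neq c^2$, one has $\Delta_P(n) = \sqrt{K}\,n$ and $\Delta_Q(n) = \sqrt{K}(n-1)$ with $K = d^2+b^2-c^2$, and the pairings $\epsilon = \epsilon'$ give $a_{22}=0$ together with $\lambda^2 = K$, recovering case (i).

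The main obstacle is the careful bookkeeping in the sign-pairing analysis: one must verify that the perfect-square reduction cleanly splits the problem into the $b^2=c^2$ and $a=0$ branches, track how the four possible sign combinations in the coincidence equation correspond to the four parameter families of the lemma, and confirm that each such parameter configuration really does produce a common eigenvalue identically in $n$ rather than merely at isolated values. Once this case analysis is executed, comparing leading-order and constant coefficients of $n$ in the coincidence equation immediately yields the parameter restrictions stated as items (i)--(v).
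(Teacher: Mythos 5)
Your structural approach is genuinely different from the paper's, which simply asserts the result ``follows from explicit calculation of the determinant of $H(n)$.'' Your identification of $H(n)$ with the Sylvester operator $B\mapsto P(n)B-BQ(n)$, $P(n)=nA_{11}+A_0+2a_{22}(n-1)I$, $Q(n)=(n-1)A_{11}+A_0$, checks out against the explicit $4\times4$ matrix in the paper, and the resultant factorization $\det H(n)=\prod_{i,j}(\rho_i-\sigma_j)$ together with the reduction to an identical eigenvalue coincidence over $\overline{\bbc(n)}$ is correct and conceptually cleaner than a brute-force determinant. The $b^2=c^2$ branch is also handled completely: all four sign pairings are accounted for and match cases (ii)--(v).

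However, there is a concrete gap in the branch $a=0$, $b^2\neq c^2$: you analyze only the pairings $\epsilon=\epsilon'$ and claim they ``recover case (i),'' silently discarding the mixed pairings $\epsilon\neq\epsilon'$. Carrying those out with $\Delta_P(n)=\sqrt{K}\,n$, $\Delta_Q(n)=\sqrt{K}(n-1)$, $K=d^2+b^2-c^2$, the coincidence equation $\lambda+2a_{22}(n-1)=\epsilon'\Delta_Q(n)-\epsilon\Delta_P(n)$ gives, from the coefficient of $n$, $2a_{22}=(\epsilon'-\epsilon)\sqrt{K}=\pm2\sqrt{K}$, and from the constant term $\lambda=a_{22}$; i.e.\ the additional family $a=0$, $a_{22}=\lambda$, $\lambda^2=b^2-c^2+d^2$, which appears in none of (i)--(v) when $b\neq\pm c$. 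This is not vacuous: for $a=d=0$, $b=2$, $c=1$, $a_{22}=\lambda=-\sqrt{3}$ one computes $u^2+p^2-q^2=6n(2n-1)$ and $4u^2p^2=36n^2(2n-1)^2$ in the identity $\det H(n)=(u^2+p^2-q^2)^2-4u^2p^2$, so $\det H(n)\equiv0$; equivalently $H(1)=\bigl(\begin{smallmatrix}-\sqrt3 I&3I\\ I&-\sqrt3 I\end{smallmatrix}\bigr)$ is visibly singular and the rank-one matrix $vw^T$ built from a right null vector of $A_{11}$ and a left eigenvector for $-2\sqrt3$ lies in $\ker H(n)$ for every $n$. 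You must either show these mixed pairings are impossible (they are not) or record the extra case --- which means your method, carried to completion, actually exposes an omission in the statement of the lemma itself, at least in the generality in which it is phrased. (For the paper's downstream use only $a_{22}=-1$ matters, but the extra family $a=0$, $\lambda=-1$, $b^2-c^2+d^2=1$ survives even that normalization, so the issue cannot be waved away.) Separately, the perfect-square step needs one more line: $\Delta_P(n)^2\Delta_P(n-1)^2$ could a priori be a square via a cross-pairing of roots $r_1=r_2+1$ rather than via $\Delta_P^2$ having a double root; one must check that such a pairing still leaves two simple roots, which it does, so your dichotomy $a=0$ or $b^2=c^2$ stands.
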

\begin{proof}
This follows from explicit calculation of the determinant of $H(n)$.
\end{proof}
Throughout this section we will refer to the above conditions \vocab{exceptional cases}.
We will call $\Lambda(n)$ \vocab{exceptional} if we are in such a situation.

Since we are interested in the hypergeometric case when $a_{22} = -1$, the only cases we will be concerned with are (iv) and (v).
These constitute specifically four different possibilities:
\begin{enumerate}
\item[(iv.a)] $a_{22}=d$, $b=c$, $\lambda=d+2a$
\item[(iv.b)] $a_{22}=d$, $b=-c$, $\lambda=d+2a$
\item[(v.a)] $a_{22}=-d$, $b=c$, $\lambda=-(d+2a)$
\item[(v.b)] $a_{22}=-d$, $b=-c$, $\lambda=-(d+2a$)
\end{enumerate}
Furthermore since $c = b = 0$ or $c = 2-b$, we can reduce to the case of (iv.a) and (v.a).
Thus up to similarity, we can reduce to the two cases (iv.a) and (v.a) with $c=b=1$.

\subsection{Obtaining $M(n)$ from $B(n)$}
Furthermore, we may exploit the low dimensionality to deduce explicit dependencies between the matrices $B(n),M(n)$ and $\Lambda(n)$.
\begin{prop}\label{getting M from B}
Let $n\geq 0$.
Suppose that $[B(n),\Lambda(n)]\neq 0$ and $M(n)$ is not a scalar.
Then there exists a constant $\beta(n)$ such that
\begin{equation}\label{B to M}
M(n) = \beta(n)[B(n),\Lambda(n)]J
\end{equation}
where here $J$ is the symplectic matrix $J = \mxx{0}{1}{-1}{0}$.
\end{prop}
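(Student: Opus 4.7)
The plan is to combine the three Hermiticity relations from Lemmas \ref{lambda symmetry} and \ref{B symmetry} --- namely that $M(n)$, $B(n)M(n)$, and $\Lambda(n)M(n)$ are all Hermitian --- to show that $[B(n),\Lambda(n)]\,M(n)$ is skew-Hermitian, and then to leverage the fact that the space of real $2\times 2$ skew-symmetric matrices is one-dimensional, spanned by $J$.

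Fix $n$ and abbreviate $B=B(n)$, $\Lambda=\Lambda(n)$, $M=M(n)$, $K=[B,\Lambda]$. From $BM=MB^{*}$ and $\Lambda M=M\Lambda^{*}$ one computes
\[
KM=(BM)\Lambda^{*}-(\Lambda M)B^{*}=MB^{*}\Lambda^{*}-M\Lambda^{*}B^{*}=M[B^{*},\Lambda^{*}]=-MK^{*},
\]
so $(KM)^{*}=M^{*}K^{*}=MK^{*}=-KM$. In the real setting this says $KM$ is skew-symmetric, so there exists a scalar $\alpha(n)\in\bbr$ with $KM=\alpha(n)J$. The hypothesis $K\neq 0$ together with the positive-definiteness of $M$ forces $\alpha(n)\neq 0$, and taking determinants of $KM=\alpha(n)J$ yields $\det(K)\det(M)=\alpha(n)^{2}\neq 0$, so in particular $K$ itself is invertible.

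Because $K$ is a commutator, $\tr K=0$, and Cayley--Hamilton for a traceless $2\times 2$ matrix collapses to $K^{2}=-\det(K)\,I$, which gives $K^{-1}=-K/\det(K)$. Solving $KM=\alpha(n)J$ for $M$ then produces
\[
M(n)=\alpha(n)K^{-1}J=-\frac{\alpha(n)}{\det(K)}\,KJ=\beta(n)[B(n),\Lambda(n)]J,\qquad \beta(n)=-\alpha(n)/\det(K).
\]

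The only substantive step is the opening manipulation showing $KM=-MK^{*}$; after that, everything is dimension counting in the space of $2\times 2$ skew-symmetric matrices together with the traceless Cayley--Hamilton identity, so I do not anticipate a real obstacle. The auxiliary hypothesis that $M(n)$ is not a scalar multiple of $I$ does not formally enter the argument above, but appears to ensure that the resulting formula carries genuine information rather than being a restatement of the fact that, when $M\in\bbr\cdot I$, both $B$ and $\Lambda$ are Hermitian and $K$ is automatically a multiple of $J$.
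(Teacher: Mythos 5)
Your proof is correct, but it takes a genuinely different route from the paper's. You show directly, from $BM=MB^{*}$ and $\Lambda M=M\Lambda^{*}$, that $K M=-MK^{*}$, hence that $KM$ is skew-symmetric and therefore a real multiple $\alpha J$ of $J$; you then invert $K$ via the traceless Cayley--Hamilton identity $K^{2}=-\det(K)I$ to solve for $M$. The paper instead observes that the four symmetric matrices $M, M^{2}, B M, \Lambda M$ must be linearly dependent in the three-dimensional space of symmetric $2\times 2$ matrices, cancels $M$ from the resulting relation, and takes commutators with $B$ and $\Lambda$ to extract $MJ\propto[B,\Lambda]$, with a short case analysis on which coefficients are nonzero. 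Both arguments hinge on the same two ingredients (the symmetry relations of Lemmas \ref{lambda symmetry} and \ref{B symmetry}, and the one-dimensionality of $2\times2$ skew-symmetric matrices over $\bbr$), but yours is more direct: it avoids the dimension-count and the case split, it yields the extra byproduct $\det(K)\det(M)=\alpha(n)^{2}>0$ (so $[B(n),\Lambda(n)]$ is automatically invertible, which the paper implicitly needs later in \eqref{B to C}), and, as you note, it does not use the hypothesis that $M(n)$ is non-scalar --- indeed when $M$ is scalar the symmetry relations force $K$ itself to be a multiple of $J$ and the conclusion still holds, so that hypothesis is genuinely only needed for the paper's argument, not for the statement.
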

\begin{remk}
It is worth noting that this is the precise point in the derivation that we require $M(n)$, $B(n)$ and $\Lambda(n$) to all be real-valued.
\end{remk}
\begin{proof}
The space of $2\times 2$ skew-symmetric matrices is one dimensional.
Therefore there exist constants $\gamma_1,\gamma_2\in\bbr$ satisfying
$$B(n)-B(n)^* = \gamma_1J,\ \ \Lambda(n)-\Lambda(n)^* = \gamma_2J.$$

Next note that the matrices $M(n), M(n)^2, B(n)M(n)$, and $\Lambda(n)M(n)$ are all symmetric by Lemma \ref{lambda symmetry} and Lemma \ref{B symmetry}.
Therefore they are linearly dependent and there exist constants $\alpha_1,\alpha_2,\alpha_3,\alpha_4\in\bbr$, not all zero, satisfying
$$\alpha_1M(n) + \alpha_2 M(n)^2 +  \alpha_3B(n)M(n) + \alpha_4\Lambda(n)M(n) = 0.$$
Since $M(n)$ is invertible, this implies
$$\alpha_1I + \alpha_2 M(n) +  \alpha_3B(n) + \alpha_4\Lambda(n) = 0.$$
Taking the commutator of this expression with $B(n)$ or $\Lambda(n)$ and using the fact that $B(n)M(n) = M(n)B(n)^*$, and $\Lambda(n)M(n) = M(n)\Lambda(n)^*$ we get
$$\alpha_2\gamma_1M(n)J =-\alpha_4[B(n),\Lambda(n)]$$
$$\alpha_2\gamma_2M(n)J = \alpha_3[B(n),\Lambda(n)]$$
Since $M(n)$ is not a scalar, either $\alpha_3\neq 0$ or $\alpha_4\neq 0$.
If $\alpha_3\neq 0$, then the above shows $\alpha_2\gamma_2\neq 0$ and therefore we obtain \eqref{B to M} with $\beta(n) = \alpha_3/(\alpha_2\gamma_2)$.
Alternatively, if $\alpha_4\neq 0$ we obtain \eqref{B to M} with $\beta(n) = -\alpha_4/(\alpha_2\gamma_1)$.
\end{proof}
Consequently for sequential values of $n$ satisfying the assumptions of the previous proposition, we have the following equation for the $C(n)$'s
\begin{equation}\label{B to C}
C(n+1) = M(n+1)M(n)^{-1} = \frac{\beta(n+1)}{\beta(n)}[B(n+1),\Lambda(n+1)][B(n),\Lambda(n)]^{-1}
\end{equation}
The formula for $M(n)$ in \eqref{B to M} has the property that the symmetry conditions from Lemma \ref{lambda symmetry}, Lemma \ref{B symmetry}, and Lemma \ref{C symmetry} are \emph{automatically satisfied}.
\begin{lem}
Suppose that there exist constants $\beta(n-1),\beta(n),\beta(n+1)$ such that
$$M(j) = \beta(j)[B(j),\Lambda(j)]J,\ \ j=n-1,n,n+1,$$
and also
$$C(j+1) = \frac{\beta(j+1)}{\beta(j)}[B(j+1),\Lambda(j+1)][B(j),\Lambda(j)]^{-1},\ \ j= n-1,n.$$
Then the matrices $M(n)$, $B(n)M(n)$, $C(n)M(n)$, $C(n+1)M(n)$, and $\Lambda(n)M(n)$ are all symmetric.
\end{lem}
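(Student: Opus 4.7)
The plan is to exploit a simple observation about the symplectic form $J$: a $2\times 2$ matrix $N$ satisfies that $NJ$ is symmetric if and only if $\tr(N)=0$. Indeed, writing $N=\mxx{p}{q}{r}{s}$ one computes $NJ=\mxx{-q}{p}{-s}{r}$, which is symmetric precisely when $p=-s$. Every commutator $[X,Y]$ of $2\times 2$ matrices is traceless, and moreover $\tr(X[X,Y])=\tr(X^2Y)-\tr(XYX)=0$ and $\tr(Y[X,Y])=\tr(YXY)-\tr(Y^2X)=0$ by cyclicity of the trace.

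Applying this observation at index $n$ gives three of the five claims immediately: $M(n)=\beta(n)[B(n),\Lambda(n)]J$ is symmetric because $[B(n),\Lambda(n)]$ is traceless, while $B(n)M(n)=\beta(n)\bigl(B(n)[B(n),\Lambda(n)]\bigr)J$ and $\Lambda(n)M(n)=\beta(n)\bigl(\Lambda(n)[B(n),\Lambda(n)]\bigr)J$ are symmetric because their prefactors are traceless by the cyclicity computations above.

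For the remaining two, I would first rewrite the hypothesis in the more suggestive form $C(j+1)=M(j+1)M(j)^{-1}$ for $j=n-1,n$. This is a direct substitution:
$$M(j+1)M(j)^{-1}=\beta(j+1)[B(j+1),\Lambda(j+1)]J\cdot J^{-1}[B(j),\Lambda(j)]^{-1}\beta(j)^{-1},$$
and the $J$-factors cancel to yield exactly the stated product for $C(j+1)$. From this rewriting, $C(n+1)M(n)=M(n+1)$, which is symmetric by the previous paragraph applied at index $n+1$. And
$$C(n)M(n)=M(n)M(n-1)^{-1}M(n),$$
which has the form $XYX$ with $X=M(n)$ and $Y=M(n-1)^{-1}$ both symmetric (since the inverse of a symmetric invertible matrix is symmetric), and is therefore itself symmetric.

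There is no serious obstacle in the lemma itself; the only genuine insight is the traceless-times-$J$ identity. The actual work was already done in Proposition \ref{getting M from B}, where the ansatz $M(n)=\beta(n)[B(n),\Lambda(n)]J$ was extracted from the three symmetry constraints of Lemmas \ref{lambda symmetry}, \ref{B symmetry}, and \ref{C symmetry}. The present lemma is the converse consistency check: once one plugs the ansatz back in, all the symmetry identities hold automatically, so the formula may be used safely throughout the remainder of the paper without ever re-verifying symmetry.
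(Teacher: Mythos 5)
Your proof is correct and follows essentially the same route as the paper: both rest on the observation that $NJ$ is symmetric exactly when $N$ is traceless, apply it to the commutator expressions for $M(n)$, $B(n)M(n)$, and $\Lambda(n)M(n)$ (the paper rewrites $B(n)[B(n),\Lambda(n)]$ as the commutator $[B(n),B(n)\Lambda(n)]$ where you compute the trace directly by cyclicity, which is the same fact), and then dispatch $C(n+1)M(n)=M(n+1)$ and $C(n)M(n)=M(n)M(n-1)^{-1}M(n)$ by the same identities. No gaps.
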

\begin{proof}
For any trace-free $2\times 2$ matrix $X$, the product $XJ$ is symmetric.
In particular $M(n)$ is symmetric, as are the matrices
$$B(n)M(n) = \beta(n)B(n)[B(n),\Lambda(n)]J = \beta(n)[B(n),B(n)\Lambda(n)]J$$
and
$$\Lambda(n)M(n) = \beta(n)\Lambda(n)[B(n),\Lambda(n)]J = \beta(n)[\Lambda(n)B(n),\Lambda(n)]J$$
are both symmetric.
Also clearly $C(n+1)M(n) = M(n+1)$ and $C(n)M(n) = M(n)M(n-1)^{-1}M(n)$ are both symmetric.

\end{proof}

We can also demonstrate that in the case $\Lambda(n)$ and $B(n)$ commute, the weight matrix $W(x)$ is reducible and therefore not interesting from the point of view of classification.
\begin{lem}\label{Lambda not scalar}
Suppose $(W(x),\mathfrak D)$ is a normalized $2\times 2$ matrix Bochner pair and that $\Lambda(n)$ is scalar-valued.
Then $W(x)$ is reducible.
\end{lem}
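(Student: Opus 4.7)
The starting point is the formula \eqref{lambda equation}, $\Lambda(n) = a_{22}n^2 I + (A_{11} - a_{22}I)n + A_0$, which is scalar-valued for all $n$ if and only if both $A_{11}$ and $A_0$ are scalar matrices. Write $A_{11} = \alpha I$, $A_0 = \mu I$, and $A_1(x) = \alpha x I + A_{10}$. Then the auxillary equation \eqref{aux equation} simplifies dramatically: its right-hand side $2(WA_0^* - A_0 W)$ vanishes because $A_0$ is scalar, and on the left $\alpha x I$ is central, so $(W(x)A_{10}^* - A_{10} W(x))' \equiv 0$ and therefore $W(x)A_{10}^* - A_{10}W(x)$ equals a constant matrix $C$.

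The plan is next to show $C = 0$ and exploit the resulting commutation identity. From \eqref{A10 equation} the relation $[A_0, B(0)] = 0$ gives $A_{10} = -\alpha B(0)$, and Lemma \ref{B symmetry} states that $B(0) M(0)$ is symmetric, equivalently $A_{10} M(0) = M(0) A_{10}^*$. Integrating the constant matrix $C$ over $\supp(W)$ yields
\begin{equation*}
C\,|\supp(W)| = \int_{\supp(W)} C\, dx = M(0) A_{10}^* - A_{10} M(0) = 0,
\end{equation*}
so $C = 0$ and hence $A_{10} W(x) = W(x) A_{10}^*$ pointwise on $\supp(W)$. I would then conjugate the pair by $U = M(0)^{-1/2}$ (real, symmetric, positive definite) to normalize $M(0)$ to the identity. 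One checks that this conjugation preserves the identity $A_{10}W = W A_{10}^*$ (both sides transform to $U A_{10} W U^*$ and $U W A_{10}^* U^*$ respectively), and in the normalized frame the relation $A_{10} M(0) = M(0) A_{10}^*$ becomes $A_{10} = A_{10}^*$. Thus $A_{10}$ is now a real symmetric matrix commuting with $W(x)$ for every $x$.

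The proof concludes with a case analysis on the $2 \times 2$ real symmetric matrix $A_{10}$. If $A_{10}$ is not scalar, it has two distinct real eigenvalues and a further real orthogonal conjugation diagonalizes it; since $W(x)$ commutes with a diagonal matrix with distinct eigenvalues, $W(x)$ must itself be diagonal for every $x$, exhibiting $W$ as an orthogonal direct sum of two scalar weights, hence reducible. If $A_{10}$ is scalar then every coefficient matrix of $\mathfrak D$ is scalar, so $\mathfrak D = \mathfrak d I$ for a scalar second-order operator $\mathfrak d$, and the noncommutative Pearson equation \eqref{Pearson equation} becomes the entrywise scalar first order linear ODE $(W(x)_{jk} a_2(x))' = (\alpha x + \beta)\, W(x)_{jk}$; this has a one-dimensional solution space, so $W(x) = T\rho(x)$ for a constant Hermitian positive definite $T$ and a single scalar weight $\rho$, and diagonalizing $T$ again exhibits $W$ as reducible. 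The only real subtlety is the vanishing of $C$, which is what ties together the auxiliary equation, the formula \eqref{A10 equation} for $A_{10}$, and the symmetry of $B(0) M(0)$; everything after is straightforward $2\times 2$ linear algebra performed in the symmetric frame.
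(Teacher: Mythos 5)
Your proof is correct, but it takes a genuinely different route from the paper's. The paper's proof is four lines: since the pair is normalized, $B(0)$ is symmetric; $\Lambda(n)$ scalar forces $A_{11}$ and $A_0$ scalar, whence $A_{10}=[B(0),A_0]-A_{11}B(0)=-A_{11}B(0)$ is symmetric; an orthogonal conjugation diagonalizes $B(0)$ and $A_{10}$ simultaneously (they are proportional), so $\mathfrak D$ is diagonal and the pair is declared reducible. You reach the same intermediate fact that $A_{10}$ is (after normalizing $M(0)=I$) real symmetric, but then you go further: you use the auxillary equation \eqref{aux equation} to show that $W(x)A_{10}^*-A_{10}W(x)$ is a constant $C$, identify $C$ with $M(0)A_{10}^*-A_{10}M(0)=0$ by integrating, and conclude that $A_{10}$ commutes with $W(x)$ \emph{pointwise}. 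This buys you something the paper's proof leaves implicit: the paper jumps from ``$\mathfrak D$ is diagonal'' to ``the pair is reducible,'' which silently invokes the fact that $\mathfrak D$ determines $W$ up to similarity; your argument instead exhibits $W(x)$ itself as diagonal (when $A_{10}$ is non-scalar) or as a constant Hermitian matrix times a scalar weight via the Pearson equation \eqref{Pearson equation} (when $A_{10}$ is scalar), which is a direct and self-contained verification that $W$ is reducible. Two cosmetic points: the identity $C\,|\supp(W)|=0$ should be phrased to cover the case $|\supp(W)|=\infty$ (the Hermite and Laguerre supports), where the correct statement is that a nonzero constant integrand would make the integral diverge while the right-hand side $M(0)A_{10}^*-A_{10}M(0)$ is finite and zero; and your opening equivalence needs $\Lambda(n)$ scalar for at least two values of $n$ to separate $A_0$ from $A_{11}$, which the hypothesis ``for all $n$'' of course supplies.
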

\begin{proof}
Recall that $B(0)$ is symmetric and that $A_{10} = [B(0),A_0]-A_{11}B(0)$.
Since $\Lambda(n)$ is scalar-valued, we know that $A_0$ and $A_{11}$ are scalar-valued and $A_{10}$ is symmetric.
Thus up to similarity, we can take $B(0)$ and $A_{10}$ to both be diagonal.
Hence $\mathfrak D$ is also diagonal and the pair $(W(x),\mathfrak D)$ is reducible.
\end{proof}

\begin{lem}\label{B does not commute diag}
Suppose $(W(x),\mathfrak D)$ is a normalized $2\times 2$ matrix Bochner pair.
If $\Lambda(n)$ is diagonal and $B(n)$ commutes with $\Lambda(n)$ for all $n$, then $W(x)$ is reducible.
\end{lem}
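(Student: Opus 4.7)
The plan is to show that under the given hypotheses all coefficients of $\mathfrak{D}$ are diagonal matrices, and then to extract from the differential equations satisfied by $W$ that $W$ itself must be diagonal.

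First, since $\Lambda(n) = -n^2 I + A_{11}n + A_0$ is diagonal for every $n$, both $A_{11}$ and $A_0$ must be diagonal; in the parametrization \eqref{A equation} this forces $b = c = 0$. Next I show that $B(0)$ is also diagonal. If $A_0 = \tn{diag}(a,-a)$ has $a\neq 0$, then $\Lambda(0) = A_0$ has distinct eigenvalues and the commutativity of $B(0)$ with $\Lambda(0)$ immediately gives $B(0)$ diagonal. Otherwise $a=0$, and if $A_{11}$ is also scalar then Lemma \ref{Lambda not scalar} completes the argument, so we may assume $A_{11}$ nonscalar with $A_0 = 0$. In this residual case $\Lambda(n)$ has distinct eigenvalues for every $n\geq 1$, whence $B(n)$ is diagonal for all $n\geq 1$. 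To propagate diagonality back to $n=0$ I would use that with $b=c=0$ the update matrix $H(n)$ from Lemma \ref{HK lemma} is itself diagonal, so the linearized $B$-update equation decouples component-wise; the off-diagonal components at $n=0$ then reduce to scalar identities of the form $H(0)_{jj}B(0)_{\tn{off}}=0$, which force $B(0)_{\tn{off}}=0$ outside the exceptional loci of Lemma \ref{exceptional lemma}. On those loci, I would supplement the $B$-update equation with the consequences of $W$-symmetry of $\mathfrak{D}$ applied to constant and linear test polynomials, namely $A_0 m_0 = m_0 A_0$ and $(A_{11}+A_0)m_1 + A_{10}m_0 = m_1 A_0$ for the first two moments $m_0, m_1$ of $W$ (recall $B(0) = m_1 m_0^{-1}$), to rule out the remaining non-diagonal possibilities.

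With $B(0), A_{11}, A_0$ all diagonal, equation \eqref{A10 equation} yields $A_{10}$ diagonal, so every coefficient of $\mathfrak{D}$ is a diagonal matrix. We then apply Theorem \ref{differential condition}: the $(1,2)$ component of the Pearson equation \eqref{Pearson equation} and of the auxiliary equation \eqref{aux equation} each produce a first-order linear ODE for $w_{12}(x)$ whose coefficient polynomials must match; comparing coefficients in $x$ gives an over-determined algebraic system which in the generic case immediately forces $w_{12}\equiv 0$, while in the small family of degenerate parameter regimes where the two ODEs coincide, the boundary conditions \eqref{boundary conditions} together with positive-definiteness of $W$ preclude any nontrivial off-diagonal solution. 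The analogous argument handles $w_{21}$, so $W$ itself is diagonal and $(W,\mathfrak{D})$ is a reducible pair.

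The main obstacle is the exceptional-case analysis for $B(0)$: on the special parameter loci of Lemma \ref{exceptional lemma} the matrix $H(0)$ degenerates in precisely the off-diagonal components and the $B$-update equation no longer forces diagonality on its own, so one must carefully combine it with the moment relations obtained by testing the $W$-symmetry of $\mathfrak{D}$ against low-degree matrix polynomials. A parallel subtlety arises in the final step, where ruling out nontrivial $w_{12}$ in the degenerate Pearson/auxiliary regime requires appealing to boundary behavior and positivity rather than to the ODEs alone.
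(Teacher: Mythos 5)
Your argument follows the same route as the paper's own (much shorter) proof: diagonality of $\Lambda(n)$ forces $A_{11}$ and $A_0$ to be diagonal, commutativity of $B(n)$ with $\Lambda(n)$ forces $B(0)$ to be diagonal, hence $A_{10}=[A_0,B(0)]-A_{11}B(0)$ and all of $\mathfrak D$ are diagonal and the pair is reducible. The paper reduces to $\Lambda(n)$ nonscalar via Lemma \ref{Lambda not scalar} and then simply asserts that $B(n)$ is diagonal for every $n$, including $n=0$. You correctly observe that this assertion is not automatic when $\Lambda(0)=A_0$ is scalar (i.e.\ $a=0$) even though $\Lambda(n)$ is nonscalar for $n\geq 1$, and your idea of propagating diagonality from $B(1)$ back to $B(0)$ through the decoupled $B(n)$-update equation (the matrix $H(n)$ is indeed diagonal when $b=c=0$) is sound off the locus where the relevant diagonal entries of $H(0)$ vanish. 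In this respect your write-up is more careful than the paper's.

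However, the two delicate sub-steps are announced rather than executed. First, on the degenerate locus where $H(0)_{jj}=0$ for an off-diagonal index $j$ --- and note this is governed by the vanishing of an individual diagonal entry of $H(0)$, not by the determinant condition of Lemma \ref{exceptional lemma}, so that citation is not quite the right one --- you say you ``would supplement'' the update equation with the moment identities coming from $W$-symmetry, but you never verify that these identities actually force the off-diagonal entries of $B(0)$ to vanish there; as written this is a plan, not a proof. Second, in the final step you assert without computation that, in the regime where the two first-order ODEs for $w_{12}$ coincide, the boundary conditions \eqref{boundary conditions} and positivity rule out a nontrivial off-diagonal solution. Neither idea is wrong, but both must be carried out before the proof is complete; alternatively, the last step can be short-circuited by invoking the paper's standing fact that $\mathfrak D$ determines $W(x)$ up to similarity, which is how the paper itself passes from ``$\mathfrak D$ is diagonal'' to ``the pair is reducible.''
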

\begin{proof}
Without loss of generality, we may assume that $\Lambda(n)$ is not scalar-valued.
Then since $B(n)$ commutes with $\Lambda(n)$ for all $n$, we know that $B(n)$ is diagonal for all $n$.
Thus the matrices $B(0), A_{11},$ and $A_0$ are all diagonal.
Hence $\mathfrak D$ is also diagonal and the pair $(W(x),\mathfrak D)$ is reducible.
\end{proof}

\begin{prop}\label{B does not commute}
Suppose $(W(x),\mathfrak D)$ is a normalized $2\times 2$ hypergeometric matrix Bochner pair.
If $B(n)$ commutes with $\Lambda(n)$ for all $n$, then $W(x)$ is reducible.
\end{prop}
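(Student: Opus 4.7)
The strategy is a case analysis that reduces the claim to Lemmas \ref{Lambda not scalar} and \ref{B does not commute diag}. In the hypergeometric case $a_{22} = -1$ and $\Lambda(n) = -n^2 I + (A_{11}+I)n + A_0$, so the commutation hypothesis $[B(n),\Lambda(n)] = 0$ is equivalent to $[B(n),\,nA_{11} + A_0] = 0$ for every $n \geq 0$. Evaluating at $n = 0$ yields $[B(0),A_0] = 0$, and with $A_0 = \diag(a,-a)$ in the normalization (and $a \neq 0$, treating $a = 0$ separately at the end) this forces $B(0) = \diag(\alpha,\beta)$. Under the reductions preceding Theorem \ref{classifying space}, either $b = c = 0$ (in which case $A_{11}$, and hence $\Lambda(n)$, is diagonal for every $n$, placing us in Lemma \ref{B does not commute diag}), or $c = 2-b$, which I treat below.

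For $c = 2-b$ I would use the $B$-update equation \eqref{B update equation} (with $a_{21} = 0$) to solve $H(2)\vec b(1) = H(0)\vec b(0)$ for $B(1)$ as an explicit affine function of $(B_{11}(0), B_{22}(0), a, b, d, \lambda)$ in the generic regime where $H(2)$ is invertible; the exceptional parameters of Lemma \ref{exceptional lemma} reduce in the hypergeometric case to (iv.a) and (v.a) with $b = c = 1$, and are handled by direct substitution. The condition $[B(1), A_{11}+A_0] = 0$ produces, after expansion of the off-diagonal entries of the commutator, a pair of polynomial constraints on the six parameters. The $(2,1)$-constraint factors as $(b-1)\cdot 2\lambda(\beta-\alpha) = 0$ after cancellation, producing a subcase analysis: $b = 1$, $\lambda = 0$, or $\alpha = \beta$. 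The subcase $\alpha = \beta$ forces $B(0) = \alpha I$ and hence $B(1) = \alpha(A_{11}-2I)^{-1}(A_{11}+2I) \in \bbc[A_{11}]$, so $[B(1),A_{11}+A_0] = [B(1),A_0]$ and the commutation with $A_0$ forces $B(1)$ to be diagonal; but $B(1) \in \bbc[A_{11}]$ is diagonal only when $A_{11}$ is itself scalar, contradicting $(A_{11})_{12} = 2 \neq 0$. The remaining subcases $b = 1$ and $\lambda = 0$ must be propagated to $n = 2, 3, \ldots$ via the $B$-update equation, and the resulting polynomial system in the remaining parameters forces the tuple into the reducible locus (equivalently, outside the irreducible classifying space $\mathcal{E}$ of Theorem \ref{classifying space}).

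For the case $a = 0$, the matrix $A_0$ vanishes and $\Lambda(n) = n(A_{11}+(1-n)I)$, so the commutation hypothesis becomes $[B(n),A_{11}] = 0$ for all $n \geq 1$. If $A_{11}$ is scalar, Lemma \ref{Lambda not scalar} applies; otherwise each $B(n)$ with $n \geq 1$ lies in the two-dimensional commutative subalgebra $\bbc[A_{11}]$. The $C$-update equation \eqref{C update equation} simplifies substantially, since its quadratic $B$-term
$$B(n)^2(nA_{11}+A_0) - 2B(n)(nA_{11}+A_0)B(n) + (nA_{11}+A_0)B(n)^2 = [B(n),[B(n),nA_{11}+A_0]]$$
vanishes identically under the commutativity; an inductive argument using the resulting linearized recursion then confines $C(n)$ to $\bbc[A_{11}]$ as well, so that every $P(x,n)$ lives in the commutative algebra $\bbc[x][A_{11}]$ and $W(x)$ decomposes in the eigenbasis of $A_{11}$. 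The main obstacle is the propagation of the commutation condition to $n \geq 2$ in the subcases $b = 1$ and $\lambda = 0$, which is a finite but bulky polynomial calculation analogous to (though much smaller than) the Gr\"obner-basis computations used elsewhere in the paper.
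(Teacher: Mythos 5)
Your opening reduction --- invoking Lemmas \ref{Lambda not scalar} and \ref{B does not commute diag} to dispose of the cases where $\Lambda(n)$ is scalar or diagonal --- is exactly how the paper begins. But from there the paper's argument is structurally different and much shorter, and your version has a genuine gap at its core. The paper never does a finite-$n$, parameter-by-parameter case analysis. Instead it observes that once $\Lambda(n)$ is non-scalar, the commutant of a non-scalar $2\times 2$ matrix is two-dimensional, so the hypothesis forces
$$B(n)=\alpha(n)(A_{11}n+A_0)+\beta(n)I$$
for \emph{every} $n$. Under the commutation hypothesis the $B$-update equation \eqref{B update equation} linearizes to $(A_{11}-2(n+1)I)B(n+1)=B(n)(A_{11}-2(n-1)I)$; substituting the ansatz and taking the commutator with $A_{11}$ yields
$$\frac{\alpha(n)}{\alpha(n+1)}I=J^{-1}(A_{11}-2(n+1)I)J(A_{11}-2(n-1)I)^{-1},$$
whose right-hand side is non-scalar for almost every $n$ --- a contradiction valid uniformly, with no subcases on $b$ or $\lambda$.

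The gap in your proposal is that the decisive subcases are never closed. After extracting the factorization $(b-1)\cdot 2\lambda(\beta-\alpha)=0$ at $n=1$, you defer the branches $b=1$ and $\lambda=0$ to propagation "to $n=2,3,\dots$" and assert that "the resulting polynomial system \dots forces the tuple into the reducible locus" --- and you concede in your final sentence that this is the main obstacle and that the calculation is not done. As written, that is the whole content of the proposition in those branches, so the argument is incomplete precisely where it matters. (Appealing to Theorem \ref{classifying space} to conclude reducibility would also be circular, since this proposition is part of the machinery establishing that classification.) A secondary issue: in your $\alpha=\beta$ subcase the conclusion "$B(1)$ diagonal only when $A_{11}$ is scalar" fails when $\alpha=0$, i.e.\ $B(0)=0$, in which case $B(n)=0$ for all $n$ commutes with everything and your $n=1$ contradiction evaporates; that degenerate branch would need separate treatment. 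All of these difficulties disappear if you adopt the paper's commutant ansatz, which handles all $n$ at once.
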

\begin{proof}
By the previous two lemmas, without loss of generality we may assume that $\Lambda(n)$ is not diagonal.
Without loss of generality, we may assume that $\Lambda(n)$ is not scalar-valued, and therefore $a\neq0$ and also either $b\neq 0$ or $c\neq 0$.
The $B(n)$-update equation reduces to
$$(A_{11}-2(n+1)I)B(n+1) = B(n)(A_{11}-2(n-1)).$$
Now since $B(n)$ commutes with $\Lambda(n)$ and the latter is not diagonal, we can write
$$B(n) = \alpha(n)(A_{11}n + A_0) + \beta(n)I$$
for some constants $\alpha(n),\beta(n)\in\bbc$.
Inserting this above and taking the commutator with $A_{11}$ and simplifying we find
$$\frac{\alpha(n)}{\alpha(n+1)}I=J^{-1}(A_{11}-2(n+1)I)J(A_{11}-2(n-1)I)^{-1}$$
where here $J = \mxx{0}{1}{-1}{0}$.
For almost every $n$, the matrix on the right is not scalar-valued so this is a contradiction.
\end{proof}

Thus our task is reduced to finding $B(n)$ and $\beta(n)$ satisfying the update equation for $B(n)$ with $M(n)$ defined as in \eqref{B to M} satisfying the update equation for $M(n)$.

\subsection{Algebraic equations}
Let $\wt M(n) = [B(n),\Lambda(n)]J$ so that $M(n)$ and $\wt M(n)$ are scalar multiples of one another.
The update equation for $M(n)$ implies that the three symmetric matrices
$$2a_{22}(2n+1)\wt M(n+1) + (A_{11}\wt M(n+1)+\wt M(n+1)A_{11}^*),$$
$$\wt M(n)\left(2a_{22}(2n-3)\wt M(n-1)^{-1} + \wt M(n-1)^{-1}A_{11} + A_{11}^*\wt M(n-1)^{-1}\right)\wt M(n),$$
$$[B(n),[B(n),\Lambda(n)]]\wt M(n) - 2a_2(B(n))\wt M(n),$$
will all be linearly dependent.
Consequently, the determinant of the $3\times 3$ matrix formed by taking the diagonal and superdiagonal entries of the above matrices as rows must be zero.
Inserting the explicit equation for $B(n)$ from Lemma \ref{HK lemma} directly into the result, we obtain a polynomial in $n$ whose coefficients are polynomials in $a,d,y$, and $B(0)_{ij}$ for $1\leq i\leq j \leq 2$.
The expression for this polynomial is extremely large and it's value is difficult to manipulate with computer software, let alone by hand.
For this reason, we omit the expression here.
As a polynomial in $n$, the expression has degree $46$ and leads to a system of $46$ polynomial equations in the seven variables $a,d,y,$ and $B(0)_{ij}$ from which we can obtain the existence, or lack thereof, of solutions via Gr\"obner basis techniques.
Furthermore, when the determinant vanishes the scalars that describe the linear dependence of the above three quantities must satisfy a recursion relation associated with the three-term recurrence relation.
As a result, we obtain precisely the families in the statement of Theorem \ref{classifying space}.
The explicit values of $W(x)$ are then solved for by solving the associated noncommutative Pearson equation \eqref{Pearson equation} and finding the corresponding solution which also satisfies the auxillary equation \eqref{aux equation}.
This results in the family of $2\times2$ hypergeometric matrix Bochner pairs in the statement of Theorem \ref{classification theorem}.

\section{Exclusion of the exceptional cases}
In the previous section, we established the families of Jacobi matrices as the only possible families, outside of the exceptional situation wherein the matrix $B(n)$ does not necessarily have a rational expression.
Our remaining task in order to prove the classification theorem stated in the introduction is to exclude this exceptional case, showing that there are no $2\times2$ hypergeometric matrix Bochner pairs when conditions (iv) or (v) of Lemma \ref{exceptional lemma} hold.
\subsection{Analysis of case (iv)}
In this section, we consider the exceptional cases, ie. (iv) or (v) from Lemma \ref{exceptional lemma}.
We begin with the analysis of case (iv), which up to similarity is defined by $b=c$, $d=a_{22}$ and $\lambda = d + 2a$.
In this case
$$A_{10} = \mxx{-2(a+a_{22})B(0)_{11}-2cB(0)_{21}}{-2(2a+a_{22})B(0)_{21}-2cB(0)_{22}}{0}{-2aB(0)_{22}},$$
$$A_{11} = \mxx{2a+2a_{22}}{2c}{0}{2a},\ \ \text{and}\ \ A_0 = \mxx{a}{0}{0}{-a}.$$
Here we have used the fact that $B(0)$ is symmetric ($B(0)_{12} = B(0)_{21}$) and that
$$A_{10} = [B(0),A_0]-A_{11}B(0).$$
The associated weight matrix can be obtained by starting with the noncommutative Pearson equation
$$2(a_2(x)W(x))' = A_1(x)W(x) + W(x)A_1(x)^*.$$
In our particular case, this implies
\begin{align*}
(a_2(x)W(x)_{11})' &= -2(B(0)_{11}(a+a_{22}) + B(0)_{12}c - x(a+a_{22}))W(x)_{11}\\
                   &+ -2(aB(0)_{12} + B(0)_{12}(a+a_{22}) + B(0)_{22}c - cx)W(x)_{12}\\
(a_2(x)W(x)_{12})' &= (-a(B(0)_{22} - x) - B(0)_{11}(a + a_{22}) - B(0)_{12}c + x(a + a_{22}))W(x)_{12}\\
                   &+ (-2aB(0)_{12} - a_{22}B(0)_{12} - B(0)_{22}c + cx)W(x)_{22}\\
(a_2(x)W(x)_{22})' &= 2a(x-B(0)_{22})W(x)_{22}
\end{align*}

This system of equations may be solved for $W(x)$ \emph{explicitly}, by solving for $W(x)_{22}$, then $W(x)_{12}$, and finally for $W(x)_{11}$.

Additionally, we know that the weight matrix $W(x)$ must satisfy the second order differential equation
$$(a_2(x)W(x))'' - (W(x)A_1(x)^*)' + W(x)A_0^* = A_0W(x)$$
along with the usual boundary conditions that
$$a_2(x)W(x),\ \ \text{and}\ \ (a_2(x)W(x))' - A_1(x)W(x)$$
vanish at the endpoints of the support of $W(x)$.
We show that the only such solutions in the Jacobi case are reducible solutions.
\begin{prop}[Jacobi case (iv)]
Suppose that $(W(x),\mathfrak D)$ is a normalized $2\times 2$ matrix Bochner with $a_2(x) = 1-x^2$, $b=c$, $d=-1$ and $\lambda = 2a-1$.
Then $c = 0$ and $a = 1/2$ so that $\mathfrak D$ is diagonal and the matrix Bochner pair is reducible.
\end{prop}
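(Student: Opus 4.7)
The plan is to exploit the upper-triangular structure of $A_{11}$ in case (iv), which comes from $b=c$ making the lower-left entry $b-c$ vanish, in order to reduce the noncommutative Pearson equation \eqref{Pearson equation} to a triangular system of scalar first-order linear ODEs for the entries of $W(x)$. Starting with the $(2,2)$-equation $(a_2(x)W(x)_{22})' = 2a(x - B(0)_{22})W(x)_{22}$, we obtain a closed-form Jacobi-type scalar weight $W(x)_{22} = K_{22}(1-x)^{-a(1-B(0)_{22})-1}(1+x)^{-a(1+B(0)_{22})-1}$. Substituting this into the $(1,2)$-equation yields a linear ODE whose solution is expressible via variation of parameters as an incomplete-beta-type integral; the $(1,1)$-equation is then solved similarly. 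This produces an explicit parametrized candidate $W(x)$ depending on $a$, $c$, the three independent entries of $B(0)$, and three integration constants.

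The next step is to impose the auxiliary equation \eqref{aux equation} and the two boundary conditions \eqref{boundary conditions} on this family. The auxiliary equation, being the skew-symmetric part of the fundamental second-order equation \eqref{fundamental differential equation} not captured by Pearson, yields algebraic relations among $a$, $c$, $B(0)_{11}$, $B(0)_{12}$, $B(0)_{22}$ after substitution of the integrated expressions. The boundary conditions at $x=\pm 1$ further demand that $(1-x^2)W(x)$ and $((1-x^2)W(x))'-A_1(x)W(x)$ both vanish there; combined with positive-definiteness of $W(x)$ on $(-1,1)$, this constrains both the exponents appearing in $W(x)_{22}$ and the integration constants introduced in solving for $W(x)_{12}$ and $W(x)_{11}$.

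The principal obstacle is the endpoint asymptotics of the non-elementary integrals defining $W(x)_{12}$ and $W(x)_{11}$, which reduce to hypergeometric-type integrals whose leading singular behavior at $x = \pm 1$ must be extracted carefully. Once this is carried out, the resulting algebraic system admits only the solution $c = 0$ and $a = 1/2$: the vanishing of the coefficient of the most singular boundary term in $W(x)_{12}$ forces $c = 0$, which kills the off-diagonal entry of $A_{11}$; then the off-diagonal entry $(A_{10})_{12} = -2(2a-1)B(0)_{21} - 2cB(0)_{22}$ of $A_{10}$ must also vanish for consistency of the decoupled Pearson and auxiliary equations, and since the reduction to case (iv) precludes $B(0)_{21} = 0$ in the irreducible setting, we conclude $a = 1/2$. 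At these parameter values $A_{11}$, $A_{10}$, and $A_0$ are all diagonal, so $\mathfrak D$ is diagonal and $(W(x),\mathfrak D)$ splits as a direct sum of two scalar Bochner pairs, i.e., it is reducible.
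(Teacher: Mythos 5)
Your overall architecture --- solve the Pearson equation \eqref{Pearson equation} triangularly for $W_{22}$, $W_{12}$, $W_{11}$, then impose the auxiliary equation \eqref{aux equation} and the boundary conditions \eqref{boundary conditions} --- is legitimate in principle by Theorem \ref{differential condition}, and your formula for $W_{22}$ agrees with the paper's. However, the two decisive steps are not actually carried out, and the second one has a genuine logical flaw. First, the claim that ``the resulting algebraic system admits only the solution $c=0$'' is asserted, not derived: extracting the leading singular behavior of the incomplete-beta-type integrals for $W_{12}$ and $W_{11}$ at $x=\pm 1$ and showing the resulting constraints force $c=0$ is precisely where all the difficulty lives, and nothing in the proposal demonstrates it. The paper sidesteps this entirely: it observes that $(1-x^2)W_{12}$ satisfies \emph{two distinct} first-order linear ODEs --- one from the upper-right entry of the Pearson equation and one from the upper-right entry of the full second-order equation \eqref{fundamental differential equation} --- and checks that for no values of $a$ and $B(0)_{ij}$ do these two equations admit a common nonzero solution. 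That is a purely algebraic incompatibility of two first-order equations, requiring no integration and no endpoint asymptotics.

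Second, your derivation of $a=1/2$ is circular. You argue that once $c=0$, the entry $(A_{10})_{12}=-2(2a-1)B(0)_{21}$ ``must vanish for consistency of the decoupled Pearson and auxiliary equations,'' and then exclude $B(0)_{21}=0$ by irreducibility. But $(A_{10})_{12}=0$ is equivalent to $\mathfrak D$ being diagonal, which is part of the conclusion you are trying to prove, and no argument is given for why the Pearson/auxiliary system forces it --- with $c=0$ the matrix $A_{11}$ is diagonal but $A_1(x)$ still has an off-diagonal constant term, so the system does not decouple a priori. Moreover the proposition does not assume irreducibility (its conclusion is that the pair \emph{is} reducible), so you cannot invoke irreducibility to rule out $B(0)_{21}=0$. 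The paper instead handles $c=0$ the same way as $c\neq 0$: it produces a second pair of mutually incompatible first-order equations for $W_{12}$ whose compatibility forces $a=1/2$. You would need to supply an analogous computation rather than a consistency heuristic.
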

\begin{proof}
Assume that $c\neq 0$.
Then up to similarity, we can take $c = 1$.
The noncommutative Pearson equation immediately tells us that up to a nonzero constant multiple
$$W(x)_{22} = (1-x)^{aB(0)_{22} - a - 1}(x + 1)^{-aB(0)_{22} - a - 1}.$$
Furthermore the upper right component of the noncommutative Pearson equation says
$$((1-x^2)W(x)_{12})' = f(x)((1-x^2)W(x)_{12}) + g(x)W(x)_{22}$$
where $f(x)$ and $g(x)$ are the rational functions given by
\begin{align*}
f(x) &= \frac{(a(B(0)_{11} + B(0)_{22}) + B(0)_{12}-B(0)_{11} + x(1-2a))}{1-x^2}\\
g(x) &= ((1- 2a)B(0)_{12} -B(0)_{22} + x)W(x)_{22}.
\end{align*}
However, if we combine this with the upper-right entry of the the general second order equation and simplify, we find also that
$$((1-x^2)W(x)_{12})' = \wt f(x)((1-x^2)W(x)_{12}) + \wt g(x)W(x)_{22}$$
where $\wt f(x)$ and $\wt g(x)$ are the rational functions given by
\begin{align*}
\wt f(x) &= \frac{2x(a(B(0)_{11}-B(0)_{22}) - B(0)_{11} + B(0)_{12} + x) + (1-2a)(1-x^2)}{(1-x^2)(a(B(0)_{22}-B(0)_{11})+B(0)_{11}-B(0)_{12}-x)}\\
\wt g(x) &= \frac{4a^2B(0)_{12}(B(0)_{22}-x) - 2aB(0)_{12}(B(0)_{22}+x) + 2aB(0)_{22}^2 - 4aB(0)_{22}x}{a(B(0)_{11}-B(0)_{22}) -B(0)_{11}+B(0)_{12} + x}\\
         &+ \frac{2ax^2 + 2B(0)_{12}x - 2B(0)_{22}x + x^2 + 1}{a(B(0)_{11}-B(0)_{22}) -B(0)_{11}+B(0)_{12} + x}
\end{align*}
Thus we have found two different first order linear equations which $W(x)_{12}$ satisfies.
One verifies directly that for no values of $B(0)_{ij}$ and $a$ does there exist $W(x)_{12}$ which can satisfy both of these second-order linear equations simultaneously.
Thus $c=0$.

Now take $c =0$.
By a similar argument to the previous paragraph, we obtain a pair of first order linear equations (omitted for brevity) which have a solution only if $a = 1/2$.
This completes the proof.
\end{proof}
Note that the case $c=0$ is also a special case of exceptional case (v), and will be handled in the next section.

\subsection{Analysis of case (v)}
Next we analyze case (v), which is defined up to similarity by $b=c$, $d=-a_{22}$, and $\lambda = -(d+2a)$.
In this case
$$A_{10} = \mxx{2aB(0)_{11}-2cB(0)_{21}}{-2cB(0)_{22}}{2(2a-a_{22})B(0)_{21}}{2(a-a_{22})B(0)_{22}},$$
$$A_{11} = \mxx{-2a}{2c}{0}{-2a+2a_{22}},\ \ \text{and}\ \ A_0 = \mxx{a}{0}{0}{-a}.$$
Unlike case (iv), the matrix $A_1(x)$ is no longer upper triangular so we will have trouble with solving the noncommutative Pearson equation directly.
Instead in this case, we solve the $\Ad$-condition directly for the values of $B(n)$.

For case (v), the update equation for $B(n)$ defines an overdetermined system of discrete update equations for $B(n)_{11},B(n)_{21},$ and $B(n)_{22}$, while imposing no condition on the value of $B(n)_{12}$.
Specifically, the upate equation tells us
$$\wt H(n+2) [B(n+1)_{11}\ B(n+1)_{21}\ B(n+1)_{22}]^T = \wt H(n) [B(n)_{11}\ B(n)_{21}\ B(n)_{22}]^T -2a_{21}[1\ 0\ 1]^T,$$
for $\wt H(n)$ the $3\times 3$ matrix
$$\wt H(n) = \left(\begin{array}{ccc}
2a_{22}n-2a-2a_{22} & 2cn & 0\\
0  & -2cn+2c & 2a_{22}n-2a\\
0  & 4a_{22}n -4a-2a_{22} & 0
\end{array}\right).$$
along with the condition that
\begin{align}\label{overdetermined eqn}
& (-2(n+2)c+2c)B(n+1)_{11} + 2(n+2)cB(n+1)_{22}\\\nonumber
  & = (-2nc+2c)B(n)_{11} + 2ncB(n)_{22}.
\end{align}

We temporarily ignore \eqref{overdetermined eqn} which overdetermines the system, and find values of $B(n)_{ij}$ satisfying the remaining system of update equations.
Proceeding as in the proof of Lemma \ref{HK lemma}, we can determine explicit equations for $B(n)_{11}$, $B(n)_{21}$ and $B(n)_{22}$ in terms of the initial values, directly from the condition involving the matrix $\wt H(n)$.
The explicit equations for $B(n)_{11}$, $B(n)_{21}$ and $B(n)_{22}$ depend on the value of $a_2(x)$ and are given in the Jacobi cases by\\
\begin{align*}
B(n)_{11}  &= B(0)_{11}\frac{a(a-1)}{(a + n)(a + n - 1)}\\
           &+ B(0)_{21}c\frac{(2a-1)(4a-1)n^2 + (2a-1)(4a^2-2a+1)n}{(a + n)(a + n - 1)(2a + 2n - 1)(2a + 2n + 1)}\\
B(n)_{21}  &= B(0)_{21}\frac{4a^2-1}{4a^2+8an + 4n^2-1}\\
B(n)_{22}  &= B(0)_{22}\frac{a(a + 1)}{(a + n)(a + n + 1)}\\
           &- B(0)_{21}c\frac{(2a+1)(4a+1)n^2 + (2a+1)(4a^2+2a+1)n}{(a + n)(a + n + 1)(2a + 2n - 1)(2a + 2n + 1)}
\end{align*}

Now we remember that the values of $B(n)_{ij}$ must also satisfy the \eqref{overdetermined eqn}.
Using this, we obtain certain constraints on the available values of $B(0)_{ij}$, $a$ and $c$.
Specifically, we find that one of the following holds
\begin{enumerate}[(\text{v.}a)]
\item $c = 0$
\item $c\neq 0$, $a^2 = 1$, $B(0)_{21} = 0$, and
$$a(B(0)_{11} - B(0)_{22}) - (B(0)_{11}+B(0)_{22})=0$$
\item $c\neq 0$, $B(0)_{11} + 2cB(0)_{21} - B(0)_{22} = 0$, and
$$2a(B(0)_{11}-B(0)_{22}) + B(0)_{11}+B(0)_{22} = 0.$$
\end{enumerate}

In each of these cases, the differential criteria imply that there exists no solution.

\section{Bispectral Darboux transformations}
In this section, we obtain some of the families of orthogonal matrix polynomials in the Classification Theorem \ref{classification theorem} by factoring pairs of $1\times 1$ matrix Bochner pairs.
Specifically, we start with a $2\times 2$ matrix Bochner pair of the form $(R(x),\diag(\mathfrak d_1,\mathfrak d_2))$ for $(r_i(x),\mathfrak d_i)$ a $1\times 1$ matrix Bochner pair and $R(x) = \diag(r_1(x),r_2(x))$.
Then we look for factorizations of the form
$$\mxx{\mathfrak d_1}{0}{0}{\mathfrak d_2} = (\partial_x S(x) + C)(\partial_x T(x) + F),$$
where here $S(x) = Ax + B$ and $T(x) = \pm \adj(S(x))$ for $A,B,C,F\in M_2(\bbr)$.
Each such factorization constitutes a noncommutative Darboux transformation.
In the special case that we have an additional symmetry condition
$$\partial_xS(x) + C = R(x)(-T(x)^*\partial_x + F(x)^*)S^*(x)R^{-1}(x)S(x)a_2(x)^{-1},$$
the Darboux transformation is bispectral.
In this case, the weight matrix
$$W(x) = \frac{1}{a_2(x)}T(x)\mxx{r_1(x)}{0}{0}{r_2(x)}T(x)^*$$
and the differential operator
$$\mathfrak D = (\partial_x T(x) + F)(\partial_x S(x) + C)$$
define an irreducible $2\times 2$ matrix Bochner pair.
Each of the families in the Classification Theorem \ref{classification theorem} is of this form.

The transformations described by the previous paragraph are all noncommutative bispectral Darboux transformations as defined in \cite{casper2018}, but of the special form used in \cite{casper2017}.
In particular, not all noncommutative bispectral Darboux transformations specified in \cite{casper2018} are of this form, since the latter includes transformations deriving from factorizations of operators with order higher than $2$.

\subsection{Factoring}
We begin with a classical Bochner pair
$$(\wt W(x),\wt{\mathfrak D}),\ \ \text{with}\ \ \wt{\mathfrak D} = \mxx{\mathfrak d_1}{0}{0}{\mathfrak d_2}$$
where here $\mathfrak d_1$ and $\mathfrak d_2$ are classical second-order operators, ie.
\begin{align*}
\mathfrak d_i = \partial_x^2a_2(x) + \partial_x(\alpha_i x + \beta_i) + \gamma_i
\end{align*}
where $\alpha_i,\beta_i,\gamma_i\in\bbr$ and $a_2(x) \in\{ 1,x,1-x^2\}$.
Note that the leading coefficient $a_2(x)$ is assumed to be the same for both operators in order to make $\mathfrak D$ and its Darboux transformations have scalar leading coefficients.

The Darboux transformations we consider are obtained from factorizations of $\wt{\mathfrak D}$ as a product of two first-order matrix differential operators
$$\wt{\mathfrak D} = (\partial_x S(x) + C)(\partial_x T(x) + F),$$
with $\pm T(x) = \adj(S(x))$ the adjugate of $S(x)$ and with $S(x) = Ax + B$ for constant matrices $A,B,C,F$ satisfying $\det(S(x)) = \pm a_2(x)$.

Our task is simplified by using the fact that for $2\times 2$ matrices, the adjugate satisfies
$$X + \adj(X) = \tr(X)I.$$
Since the adjugate is involutive, this implies
\begin{equation}\label{trace trick}
X\adj(Y) + Y\adj(X)  = \tr(Y\adj(X))I.
\end{equation}

Now consider our factorization of $\wt{\mathfrak D}$
\begin{align*}
\wt{\mathfrak D} &= (\partial_x S(x) + C)(\partial_x T(x) + F)\\
  & = \partial_x^2 S(x)T(x) + \partial_x(S'(x)T(x) + CT(x) + S(x)F) + CF\\
  & = \partial_x^2 a_2(x)I + \partial_x[x(\det(A)I + C\adj(A) + AF) + A\adj(B) + C\adj(B) + BF] + CF.
\end{align*}
We substitute
$$F = \pm(\adj(A) + \adj(C) + G).$$
and use \eqref{trace trick} to obtain
\begin{align*}
CF &= \pm(\det(C) + C(G + A)),\\
\pm C\adj(A) + AF &= \pm(\det(A)I + \tr(A\adj(C))I + AG)\\
\pm(A\adj(B) + C\adj(B)) + BF &= \pm(\tr(A\adj(B))I + \tr(B\adj(C))I + BG).
\end{align*}
For $\wt{\mathfrak D}$ to be diagonal, each of these expressions must also be diagonal, and therefore we must require that $C(G+A), AG,$ and $BG$ to all be diagonal.

Using this factorization, we obtain a new differential operator
\begin{align*}
\mathfrak D &= (\partial_x T(x) + F)(\partial_x S(x) + C)\\
  & = \partial_x^2 T(x)S(x) + \partial_x(T'(x)S(x) + FS(x) + T(x)C) + FC\\
  & = \partial_x^2 a_2(x)I \pm \partial_xx((\tr(\adj(A)C)+2\det(A))I + GA)\\
  & + \partial_x(2\adj(A)B + GB + \tr(\adj(C)B)I) \pm (\adj(A)+G)C \pm \det(C)I
\end{align*}

Summarizing the discussion above, we have the following lemma.
\begin{lem}\label{factorization lemma}
Fix a nonzero polynomial $a_2(x)$ of degree at most two.
Let $S(x) = Ax+B$ and $T(x) = \pm \adj(S(x))$, and $\det(S(x)) = \pm a_2(x)$.
Then for any $C$ and $G_0$ satisfying the condition that
$$C(G+A),\ \ AG,\ \ \text{and}\ \ BG\ \ \text{are all diagonal}$$
we have a factorization
$$(\partial_x (S(x) + C)(\partial_x T(x) + \adj(C) + G + A) = \mxx{\mathfrak d_1}{0}{0}{\mathfrak d_2}$$
where
$$\mathfrak d_i = \partial_x^2a_2(x) + \partial_x(\alpha_ix +\beta_i) + \gamma_i,$$
\begin{align}
\alpha_i &= \pm(2\det(A) + \tr(A\adj(C)) + [AG]_{ii})\\
\beta_i &= \pm(\tr(B\adj(C)) + \tr(A\adj(B)) + [BG]_{ii})\\
\gamma_i &= \pm(\det(C) + [C(G+A)]_{ii})
\end{align}
\end{lem}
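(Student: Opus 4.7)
The plan is to verify the stated factorization by direct expansion, exploiting the adjugate identity \eqref{trace trick} to convert all the products involving $\adj(\cdot)$ into scalar multiples of the identity, and then reading off the coefficients. This is essentially a computation, but the diagonality hypotheses on $C(G+A)$, $AG$, and $BG$ are precisely what is needed to guarantee the off-diagonal coefficients vanish.

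First I would expand
\[
(\partial_xS(x)+C)(\partial_xT(x)+F) = \partial_x^2 S(x)T(x) + \partial_x(S'(x)T(x)+CT(x)+S(x)F)+CF.
\]
Since $T(x) = \pm\adj(S(x))$ and $\det(S(x))=\pm a_2(x)$, the leading term is $a_2(x)I$. Substituting $F = \pm(\adj(A)+\adj(C)+G)$ and expanding $T(x)=\pm(\adj(A)x+\adj(B))$, one finds that the coefficient of $\partial_x$ equals
\[
\pm\bigl[x\bigl(\det(A)I + C\adj(A) + AF\bigr) + A\adj(B) + C\adj(B) + BF\bigr].
\]
Applying \eqref{trace trick} with the pairs $(A,C)$, $(A,B)$, and $(B,C)$ converts $C\adj(A)+A\adj(C)$, $A\adj(B)+B\adj(A)$, and $B\adj(C)+C\adj(B)$ each into a scalar multiple of the identity. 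After these substitutions the $\partial_x$ coefficient becomes
\[
\pm\bigl[x\bigl((2\det(A) + \tr(A\adj(C)))I + AG\bigr) + \bigl(\tr(B\adj(C)) + \tr(A\adj(B))\bigr)I + BG\bigr],
\]
and the constant term $CF$ becomes $\pm(\det(C)I + C(G+A))$ after applying \eqref{trace trick} once more to $C\adj(C)=\det(C)I$.

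At this point the hypotheses that $AG$, $BG$, and $C(G+A)$ are diagonal immediately imply that every coefficient of the resulting second-order operator is diagonal, so the product is $\diag(\mathfrak d_1,\mathfrak d_2)$ with the $\mathfrak d_i$ having the advertised form. Reading off the $(i,i)$ entries in each coefficient yields exactly the formulas for $\alpha_i$, $\beta_i$, $\gamma_i$ stated in the lemma. The only real content beyond routine bookkeeping is the repeated use of \eqref{trace trick}; there is no serious obstacle, since the diagonality hypotheses are built precisely to kill the non-scalar contributions from $AG$, $BG$, and $C(G+A)$. The $\pm$ ambiguities all arise from the two possible signs in $T(x)=\pm\adj(S(x))$ and $\det(S(x))=\pm a_2(x)$, and track consistently through the computation.
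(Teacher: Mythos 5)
Your proposal follows exactly the same route as the paper: expand the product, substitute $F=\pm(\adj(A)+\adj(C)+G)$, use the $2\times2$ adjugate identity \eqref{trace trick} to turn the cross terms $C\adj(A)+A\adj(C)$, $A\adj(B)+B\adj(A)$, $B\adj(C)+C\adj(B)$ into scalars, and invoke the diagonality hypotheses to kill the off-diagonal residue. The $\partial_x^2$, $\alpha_i$ and $\beta_i$ computations are fine. The one step that does not go through as written is the constant term: with $F=\pm(\adj(A)+\adj(C)+G)$ you get $CF=\pm\bigl(C\adj(A)+\det(C)I+CG\bigr)=\pm\bigl(\det(C)I+C(G+\adj(A))\bigr)$, and since $\adj(A)=\tr(A)I-A$ this is \emph{not} $\pm(\det(C)I+C(G+A))$ in general; your appeal to \eqref{trace trick} only handles $C\adj(C)=\det(C)I$ and leaves the $C\adj(A)$ term untouched. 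To be fair, the paper itself is inconsistent here: the displayed factorization in the lemma uses $F=\adj(C)+G+A$ (which makes $\gamma_i$ come out as stated but breaks $\alpha_i$ and $\beta_i$, since $A^2\neq\det(A)I$), while the surrounding derivation uses $F=\pm(\adj(A)+\adj(C)+G)$ (which gives the stated $\alpha_i,\beta_i$ but forces the hypothesis and conclusion to read $C(G+\adj(A))$ rather than $C(G+A)$). You should pick one convention and state the corresponding diagonality condition and $\gamma_i$ accordingly, rather than silently conflating $A$ with $\adj(A)$ in the last step.
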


As a consequence, we have the following theorem providing a large family of noncommutative bispectral Darboux transformations of classical weights.
\begin{thm}\label{darboux theorem}
Using the notation and assumptions of Lemma \ref{factorization lemma}, suppose that the operators $\mathfrak d_1$ and $\mathfrak d_2$ from the lemma are classical operators for classical weights $r_1(x)$ and $r_2(x)$, respectively.
Let $R(x) = \diag(r_1(x),r_2(x))$, $\mathfrak U = \partial_xS(x) + C$ and $\mathfrak V = \partial_x T(x) + F$.
If the concomitant of the operator $R(x)\mathfrak V^*$ vanishes at the endpoints of the support of $R(x)$ and the additional symmetry condition
$$\mathfrak Ua_2(x)^{-1}T(x)R(x)T^*(x) = R(x)\mathfrak V^*$$
is satisfied, then
$$W(x) = \frac{1}{a_2(x)}T(x)\mxx{r_1(x)}{0}{0}{r_2(x)}T(x)^*,$$
\begin{align*}
\mathfrak D
  & = (\partial_x T(x) + \adj(C) + G + A)(\partial_x S(x) + C)\\
  & = \partial_x^2a_2(x)I + \partial_x\left(x[2\det(A)I+\tr(AC)I + G\adj(A)] + (2A+G)\adj(B)+\tr(BC)I\right)\\
  & + \det(C)I + (G+A)C
\end{align*}
defines a Bochner pair $(W(x),\mathfrak D)$ and $W(x)$ is a noncommutative bispectral Darboux transformation of $\text{diag}(r_1(x),r_2(x))$.
In particular, the sequence of orthogonal matrix polynomials for $P(x,n)$ satisfies
\begin{equation}\label{p equation}
(An+C)P(x,n) := \mxx{p_1'(x,n)}{0}{0}{p_2'(x,n)}S(x) + \mxx{p_1(x,n)}{0}{0}{p_2(x,n)}C
\end{equation}
for $p_i(x,n)$ the sequence of monic orthogonal polynomials of $r_i(x)$.
\end{thm}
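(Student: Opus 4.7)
The plan is to recognize this construction as a noncommutative bispectral Darboux transformation in the sense of \cite{casper2018}, applied to the reducible classical Bochner pair $(R,\widetilde{\mathfrak D})$, and to verify the claimed Bochner structure of the transformed pair $(W,\mathfrak D)$ using the symmetry and concomitant hypotheses. There are three assertions to check: (a) $(W,\mathfrak D)$ is a matrix Bochner pair, (b) $W$ is a noncommutative bispectral Darboux transform of $\diag(r_{1},r_{2})$, and (c) the orthogonal matrix polynomials of $W$ satisfy \eqref{p equation}.

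First I would check that $W(x)=a_{2}(x)^{-1}T(x)R(x)T(x)^{*}$ is a legitimate weight matrix: Hermiticity $W=W^{*}$ is immediate from $R$ being real diagonal and $a_{2}$ being a real scalar; on the interior of $\supp R=\supp W$, $R$ is positive definite and $T(x)$ is invertible (since $\det T(x)=\pm a_{2}(x)\neq 0$ there), so $W$ is positive definite; finiteness of moments is inherited from the classical weights $r_{i}$ together with the polynomial nature of $T$. Observe moreover that $(R,\widetilde{\mathfrak D})$ is automatically a matrix Bochner pair because both $\widetilde{\mathfrak D}=\diag(\mathfrak d_{1},\mathfrak d_{2})$ and $R=\diag(r_{1},r_{2})$ are block-diagonal with each scalar $(r_{i},\mathfrak d_{i})$ a classical Bochner pair.

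The heart of the proof is to exploit the symmetry condition, compactly written as $\mathfrak U\cdot W=R\cdot\mathfrak V^{*}$, to show $\mathfrak D$ is $W$-symmetric. Taking Hermitian conjugates and using $W=W^{*}$, $R=R^{*}$ yields the companion identity $W\cdot\mathfrak U^{*}=\mathfrak V\cdot R$, and combining the two gives the operator identity
\begin{equation*}
\mathfrak D W \;=\; \mathfrak V\mathfrak U W \;=\; \mathfrak V R\mathfrak V^{*} \;=\; W\mathfrak U^{*}\mathfrak V^{*} \;=\; W\mathfrak D^{*},
\end{equation*}
which is formal self-adjointness of $\mathfrak D$ with respect to $W$. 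The concomitant-vanishing hypothesis for $R\mathfrak V^{*}$ provides exactly the boundary-term cancellation needed to upgrade this formal identity to the true symmetry $\langle P\cdot\mathfrak D,Q\rangle_{W}=\langle P,Q\cdot\mathfrak D\rangle_{W}$ after the two integrations by parts inside $\int(P\cdot\mathfrak D)WQ^{*}dx$. Together with the visibly scalar second-order leading coefficient $a_{2}(x)I$ of $\mathfrak D$, this establishes claim (a); claim (b) is then immediate from the definition in \cite{casper2018}, since the triple $(\widetilde{\mathfrak D}=\mathfrak U\mathfrak V,\,\mathfrak D=\mathfrak V\mathfrak U,\,W=a_{2}^{-1}TRT^{*})$ is exactly the data of a noncommutative bispectral Darboux transformation.

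For claim (c), a direct leading-coefficient count shows that $\widetilde P(x,n)\cdot\mathfrak U=\widetilde P'(x,n)S(x)+\widetilde P(x,n)C$ is a matrix polynomial of degree exactly $n$ with leading coefficient $An+C$, so $P(x,n):=(An+C)^{-1}\widetilde P(x,n)\cdot\mathfrak U$ is monic of degree $n$. The Darboux intertwining $\mathfrak U\mathfrak D=\widetilde{\mathfrak D}\mathfrak U$ (a tautology from $\mathfrak U\mathfrak V\mathfrak U=\widetilde{\mathfrak D}\mathfrak U=\mathfrak U\mathfrak D$) combined with $\widetilde P(x,n)\cdot\widetilde{\mathfrak D}=\Lambda(n)\widetilde P(x,n)$ gives $(\widetilde P(x,n)\cdot\mathfrak U)\cdot\mathfrak D=\Lambda(n)(\widetilde P(x,n)\cdot\mathfrak U)$, so $P(x,n)$ is a monic eigenfunction of $\mathfrak D$. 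Uniqueness of such monic eigenfunctions, which holds under the standard mild separation assumption on the spectrum of $\Lambda(n)$ in $n$, identifies $P(x,n)$ with the $n$-th monic orthogonal matrix polynomial of $W$, establishing \eqref{p equation}. The main obstacle in carrying out this plan is keeping the conventions around right-acting operators and their adjoints $\mathfrak U^{*}$, $\mathfrak V^{*}$ consistent, and verifying that the concomitant hypothesis delivers precisely the boundary cancellations required in the double integration by parts; once these conventions are pinned down the remaining algebra is essentially mechanical.
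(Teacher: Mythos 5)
Your route is genuinely different from the paper's and is mostly sound, but it has one real gap. The paper does not first establish $W$-symmetry of $\mathfrak D$ at the operator level and then appeal to uniqueness of eigenfunctions; instead it works directly with the candidate polynomials $P(x,n) = (An+C)^{-1}(\wt P(x,n)\cdot\mathfrak U)$, checks $P(x,n)\cdot\mathfrak D = \Lambda(n)P(x,n)$ with $\Lambda(n)=(An+C)^{-1}\wt\Lambda(n)(An+C)$ from the tautological intertwining $\mathfrak U\mathfrak D = \wt{\mathfrak D}\mathfrak U$, and then verifies orthogonality by hand: the symmetry hypothesis $\mathfrak U W = R\mathfrak V^*$ converts $(\wt P(x,n)\cdot\mathfrak U)W$ into $(\wt P(x,n)R)\cdot\mathfrak V^*$, a \emph{single} integration by parts against the first-order operator $\mathfrak V^*$ (whose boundary term is exactly the concomitant of $R\mathfrak V^*$ assumed to vanish) moves $\mathfrak V$ onto the other factor, and the orthogonality of the $\wt P(x,n)$ for $R$ finishes the computation. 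Since the $P(x,n)$ are monic of degree $n$, they are then \emph{by construction} the orthogonal polynomials for $W$, and the Bochner-pair property comes for free. Your operator identity $\mathfrak D W = \mathfrak V R\mathfrak V^* = W\mathfrak D^*$ is a legitimate alternative for establishing formal symmetry, and it buys a cleaner separation of claim (a) from claim (c); the factorization through $\mathfrak V R\mathfrak V^*$ also lets you replace the double integration by parts you worry about with a single one, so the concomitant hypothesis is in fact exactly what is needed.

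The gap is in your claim (c). You identify $P(x,n)$ with the $n$-th monic orthogonal polynomial of $W$ by "uniqueness of monic polynomial eigenfunctions," which requires a separation condition on the spectra of the $\Lambda(n)$ (no repeated eigenvalues across different degrees). That hypothesis is not part of the theorem and can genuinely fail for matrix weights; moreover, a monic degree-$n$ polynomial eigenfunction of a $W$-symmetric operator need not be orthogonal to all lower-degree polynomials without such a condition. The paper's direct computation of $\int P(x,n)W(x)P(x,m)^*\,dx = 0$ for $m\neq n$ sidesteps this entirely and is the step you are missing: it uses only $\mathfrak U W = R\mathfrak V^*$, the concomitant hypothesis, $\mathfrak U\mathfrak V = \wt{\mathfrak D}$, and the known orthogonality of the scalar polynomials. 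To repair your argument, replace the uniqueness appeal with that computation (which you already have all the ingredients for), after which the rest of your proposal goes through.
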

\begin{proof}
Let $\wt P(x,n) = \diag(p_1(x,n),p_2(x,n))$ for $p_i(x,n)$ the sequence of orthogonal polynomials for $r_i(x)$ and choose $\wt\Lambda(n)$ such that
$$\wt P(x,n)\cdot \wt{\mathfrak D} = \wt\Lambda(n)\wt P(x,n).$$
Set $\Lambda(n) = (An+C)^{-1}\wt\Lambda(n)(An+C)$.
The polynomials $P(x,n)$ defined as in the statement of the theorem satisfy the differential operator equation
\begin{align*}
P(x,n)\cdot\mathfrak D = (An+C)^{-1}\wt P(x,n)\cdot \mathfrak U\mathfrak V\mathfrak U = \Lambda(n)P(x,n).
\end{align*}
Moreover, they satisfy the orthogonality condition
\begin{align*}
(An+C)&\left(\int P(x,n) W(x) P(x,m)^* dx\right)(An+C)^*\\
&= \int (\wt P(x,n)R(x))\cdot \mathfrak V^* (\wt P(x,m)\cdot \mathfrak U)^* dx\\
&= \int \wt P(x,n)R(x)(\wt P(x,m)\cdot \mathfrak D)^* dx\\
&= \int \wt P(x,n)R(x)\wt P(x,m)^*dx \Lambda(m)^* = 0\ \ \text{for $m\neq n$}.
\end{align*}
Since $P(x,n)$ is monic of degree $n$ for each $n$, it defines a sequence of orthogonal polynomials for $W(x)$.
\end{proof}
By taking bispectral Darboux transformations of this form, we can obtain all of the matrices in family (I) and (II).
We suspect that the matrices in family (III) may be obtained more complicated forms of bispectral Darboux transformations of nonclassical weights.

\section*{Acknowledgement}
The author thanks F. Alberto Gr\"unbaum, Erik Koelink, Pablo Roman, and Boris Shapiro  for helpful discussions during a visit to Berkeley, Nijmegen, and Stockholm during 2019.
The author also thanks them and additionally Ignacio Zurri\'an for some helpful remarks on an earlier draft of this paper.
The author's travel was funded by an 2018 AMS-Simons Travel Grant.

\bibliographystyle{plain}
\bibliography{hyper}

\end{document}